\documentclass[14pt]{amsart}
\usepackage{geometry}                % See geometry.pdf to learn the layout options. There are lots.
\geometry{letterpaper}                   % ... or a4paper or a5paper or ...
\usepackage{graphicx}
\usepackage{amssymb}
\usepackage{amsmath}
\usepackage{epstopdf}
\usepackage{xcolor}
\usepackage{amsmath,bm}
\numberwithin{equation}{section}

\newtheorem{thm}{Theorem}[section]
\newtheorem{lem}[thm]{Lemma}

\newtheorem{hypothesis}{Hypothesis \hspace{-1.2mm}}

\DeclareMathOperator{\grad}{grad}
\DeclareMathOperator{\ddiv}{div}
\DeclareMathOperator{\rot}{rot}
\DeclareMathOperator{\Curl}{curl}
\DeclareMathOperator{\tr}{tr}

\DeclareMathOperator{\osc}{osc}

\def\grad{\operatorname{grad}}
\DeclareMathOperator{\A}{\mathcal{A}}

\title[Quasi-optimal convergence for AMFEMs]{A unified analysis of quasi-optimal convergence for adaptive mixed finite element methods}
%\author{Jun Hu}
%\address{LMAM and School of Mathematical Sciences, Peking University, Beijing100871, People's Republic of China}
%\email{hujun@math.pku.edu.cn}

\author[J. Hu]{Jun Hu$^{1}$}
\author[G. Yu]{Guozhu Yu$^{2,*}$}\thanks{*: Corresponding author.}
  \dedicatory{$^1$LMAM and School of Mathematical Sciences, Peking University, Beijing 100871,China\\
              $^2$School of Mathematics, Southwest Jiaotong University, Chengdu  610031, China}
  \thanks{The research of the first author was supported by NSFC projection 11271035,  91430213 and 11421101. }
  \thanks{The research of the second author was supported by NSFC projection 11401492.}
  \thanks{Email addresses: hujun@math.pku.edu.cn (J. Hu), yuguozhumail@163.com (G. Yu)}

%\date{\today}

\begin{document}
\maketitle
\begin{abstract}
In this paper, we  present a unified analysis of  both convergence and optimality of adaptive mixed finite element
methods  for a class of problems when the finite element spaces and corresponding a posteriori error estimates under consideration satisfy  five hypotheses.
We prove that these five conditions are sufficient for  convergence and optimality of the adaptive algorithms under consideration.
The main ingredient for the  analysis is a new method to analyze both discrete reliability  and  quasi-orthogonality. This new method arises from
an appropriate and natural choice of the norms for both  the discrete  displacement and stress spaces, namely, a mesh-dependent discrete $H^1$ norm
for the former and a $L^2$ norm for the latter, and  a newly defined projection  operator from the discrete stress space on the coarser mesh onto the discrete divergence free space on the finer mesh.  As  applications, we prove these five hypotheses for the Raviart--Thomas and Brezzi--Douglas--Marini elements of  the Poisson  and  Stokes problems in both 2D and 3D.
\end{abstract}

%\keywords{Adaptive Finite Element Method,  Poisson Equations, Stokes Equations}

\section{Introduction}
This paper is devoted to convergence and optimality of adaptive mixed finite element methods (AMFEMs) for the problem of the following form:
Given $f\in L^2(\Omega)$, find $(\sigma, u)\in \Sigma\times U$ such that
\begin{equation}\label{eq:UnifiedProblem}
\begin{split}
(\A\sigma, \tau)_{L^2(\Omega)}-(\ddiv\tau, u)_{L^2(\Omega)}=0,\text{ for any }\tau\in \Sigma,\\[0,5ex]
(\ddiv\sigma,v)_{L^2(\Omega)}-(f,v)_{L^2(\Omega)}=0, \text{ for any }v\in U.
\end{split}
\end{equation}
In the paper, we refer to $\Sigma$ as the stress space, and $U$ as the displacement space.
Here, $\Omega$ is a simply connected bounded domain in $\mathbb{R}^d (d=2,3)$ with the boundary $\partial\Omega$,
and $\Sigma$, $U$ are Sobolev spaces defined as
$$\Sigma:=H(\ddiv,\Omega;\mathbb{R}^{d\times n}),\quad U:=L^2(\Omega;\mathbb{R}^n)$$
with $n$ some positive integer. Furthermore, we assume $\A$ is a linear, bounded and semi-definite operator, satisfying
\begin{equation}\label{coe}
0\le(\A\tau, \tau)_{L^2(\Omega)}\le C\|\tau\|_{L^2(\Omega)}^2 \mbox{~and~} \|\tau\|_{L^2(\Omega)}^2\le C((\A\tau, \tau)_{L^2(\Omega)}+\|\ddiv\tau\|^2_{H^{-1}(\Omega)})
\end{equation}
for any $\tau\in\Sigma$, with the $H^{-1}(\Omega)$ norm defined as
$$\|\psi\|_{H^{-1}(\Omega)}:=\sup_{v\in H_0^1(\Omega)}\frac{(\psi,v)}{\|v\|_{H^1(\Omega)}}.$$
For convenience, we will also use $\|\tau\|_{\A}$ to denote $(\A\tau, \tau)_{L^2(\Omega)}^{1/2}$ when there is no confusion.

Many problems can be attributed to the form of \eqref{eq:UnifiedProblem}. For example, when
$$\A\tau:=\tau,\quad \Sigma\times U:=H(\ddiv,\Omega;\mathbb{R}^d)\times L^2(\Omega;\mathbb{R}),$$
the problem \eqref{eq:UnifiedProblem} is essentially the mixed formulation of the Poisson problem; when
$$\A\tau:=\tau-\frac1d(\tr\tau) I_{d\times d},\quad \Sigma\times %U:=H(\ddiv,\Omega;\mathbb{R}^{d\times d})/span\{I_{d\times d}\}\times L^2(\Omega;\mathbb{R}^d)
U:=\left\{\tau\in H(\ddiv,\Omega;\mathbb{R}^{d\times d})|\int_\Omega\tr \tau dx=0\right\}\times L^2(\Omega;\mathbb{R}^d)
$$
with the $d\times d$ identity matrix $I_{d\times d}$ , then the problem \eqref{eq:UnifiedProblem} becomes  the pseudostress-velocity formulation of the stationary Stokes problem, see for instance,  \cite{arnold1988new, cai2004least, cai2010mixed, carstensen2011priori} and the references therein.
Here and  throughout this paper, the trace operator $\tr$ is  defined as
$$
\tr \tau=\sum\limits_{i=1}^d\tau_{ii}~~ \text{ for any matrix }\tau\in \mathbb{R}^{d\times d}.
$$

For the problem \eqref{eq:UnifiedProblem},  the theory of the reliable and efficient a posteriori error analysis has been in some sense relatively mature.
We  refer the interested readers to \cite{ brandts1994superconvergence, alonso1996error, braess1996posteriori, carstensen1997posteriori,wohlmuth1999comparison,wheeler2005posteriori, lovadina2006energy,ainsworth2007posteriori,larson2008posteriori, kim2012guaranteed} and the references therein for a posteriori error estimates of the mixed finite element methods of the Poisson problem and \cite{carstensen2011priori} for a posteriori error estimates of the mixed finite element methods of the Stokes problem within the pseudostress-velocity formulation. We also mention the references \cite{ hoppe1997adaptive, carstensen2005unifying, vohralik2007posteriori,carstensen2012review, du2014residual} for the other  related works.

As for the convergence and optimality analysis, there have been several results for adaptive conforming and nonconforming finite element methods \cite{babuvska1984feedback, dorfler1996convergent, morin2000data,morin2003local,binev2004adaptive,mekchay2005convergence,hu2007convergence, stevenson2007optimality, cascon2008quasi,morin2008basic,hu2012convergence,carstensen2014axioms};
while for AMFEMs, research efforts are made mainly on the Poisson problem.
 Carstensen and Hoppe \cite{carstensen2006error} established the first error reduction and convergence  of  the adaptive lowest-order Raviart--Thomas  element method, and  similar results can be found  in \cite{becker2008optimally, carstensen2011optimal}.
Later, in \cite{chen2009convergence, du2012error} convergence and optimality were analyzed for the Raviart--Thomas and Brezzi--Douglas--Marini elements of any order.
By using the discrete Helmholtz decomposition, Huang and Xu \cite{huang2012convergence}
extended the above results to the 3D case.  For the mixed finite elements of the Stokes problem within  the
pseudostress-velocity formulation, Carstensen et. al \cite{carstensen2013quasi} proved
convergence and optimality of the adaptive lowest-order Raviart--Thomas element.  The main ingredients therein are some novel equivalence between
  the lowest-order Raviart--Thomas and   Crouzeix--Raviart elements, and  a particular Helmholtz decomposition of deviatoric tensors for the 2D case. However, the analysis can neither be generalized  to the Raviart--Thomas and Brezzi--Douglas--Marini elements of any order, nor to the 3D case.  We refer interested  readers to \cite{carstensen2014axioms} for a comprehensive review of the state of art of this field
  and also a wonderful simultaneous axiomatic analysis of both  convergence and optimality
  of adaptive finite element methods of several classes of linear and nonlinear problems.

This paper aims at a unified convergence and optimality analysis of  AMFEMs  of  the problem \eqref{eq:UnifiedProblem}
in both two and three dimensional cases.  The main result states  that if the mixed finite element methods and  associated a posteriori error estimates
 satisfy five hypotheses, see more details in next section,  the corresponding adaptive algorithms converge with optimal rates in the nonlinear
 approximate  sense. The unified analysis is based on a new method to establish  both discrete reliability  and  quasi-orthogonality, which are two main and indispensable ingredients for the convergence and optimality analysis  of  adaptive finite element methods.  In fact, in contrary to \cite{carstensen2006error,becker2008optimally, chen2009convergence,carstensen2011optimal, du2012error,huang2012convergence,carstensen2013quasi}, the discrete  displacement space  is endowed with a mesh-dependent discrete $H^1$ norm, which defines one component of the new method.  Hence the $L^2$ norm becomes a
    natural norm for the  discrete stress space. The other component of the new method is  to introduce a projection  operator from the discrete stress space on the coarser mesh onto the discrete divergence free space on the finer mesh.
    As  applications,  the Raviart--Thomas and Brezzi--Douglas--Marini elements of any order of both the Poisson problem and  the Stokes problem within  the pseudostress-velocity formulation in 2D and 3D  are proved to satisfy  these five hypotheses.  Therefore  the corresponding adaptive schemes
      admit  optimal convergence.  As a result, it  extends  the optimal convergence result for the first order Raviart--Thomas element of the Stokes
       problem in 2D  from \cite{carstensen2013quasi} to the more general case.

Throughout this paper, the notation $a\lesssim b$ represents that there exists a generic positive constant $C$, which is
independent of the mesh parameter $h$ and may not be the same at different occurrences,
such that $a \le Cb$. The symbol $a \thickapprox  b$ means $a \lesssim b\lesssim a$.

Let $v$, $\beta=(\beta_1,\cdots,\beta_d)^T$ and $\tau=(\tau_{ij})_{d\times d}$ be scalar, vector and tensor functions of two or three variables respectively, and let
$\tau_i=(\tau_{i1},\cdots,\tau_{id})^T$ denote the $i$th row for $\tau$ with $i=1,\cdots,d$. We define the grad, div, curl and rot operators by
%$$\grad v:=\left(\frac{\partial v}{\partial x_1},\cdots,\frac{\partial v}{\partial x_d}\right)^T,\quad
%\grad \bm{\beta}:=\left(\begin{array}{ccc}
%    \frac{\partial \beta_1}{\partial x_1} &\cdots &\frac{\partial \beta_1}{\partial x_d}\\
%          \vdots &\ddots &\vdots\\
%    \frac{\partial \beta_d}{\partial x_1} &\cdots &\frac{\partial \beta_d}{\partial x_d}
%    \end{array}
%    \right),$$
$$\grad v:=\left(\frac{\partial v}{\partial x_1},\cdots,\frac{\partial v}{\partial x_d}\right)^T,\quad
\grad \beta:=(\grad\beta_1,\cdots,\grad\beta_d)^T,$$
$$ \ddiv \beta:=\frac{\partial \beta_1}{\partial x_1}+\cdots+\frac{\partial \beta_d}{\partial x_d},\quad
\ddiv \tau:=(\ddiv \tau_1, \cdots, \ddiv \tau_d)^T,$$
%$$\Curl v:=\left(\frac{\partial v}{\partial x_2},-\frac{\partial v}{\partial x_1}\right)^T,
%%\qquad\qquad\qquad\qquad\qquad\qquad\qquad
%\qquad d=2, $$
%$$\Curl \bm{\beta}:=\grad\times\bm{\beta}=\left(\frac{\partial \beta_3}{\partial x_2}-\frac{\partial \beta_2}{\partial x_3},
%                         \frac{\partial \beta_1}{\partial x_3}-\frac{\partial \beta_3}{\partial x_1},
%                         \frac{\partial \beta_2}{\partial x_1}-\frac{\partial \beta_1}{\partial x_2}
%                               \right)^T,
% \quad d=3,$$
$$\Curl v:=\left(\frac{\partial v}{\partial x_2},-\frac{\partial v}{\partial x_1}\right)^T, \quad
\Curl \beta:=(\Curl\beta_1, \Curl\beta_2)^T,\quad d=2,$$
$$\Curl \beta:=\grad\times\beta,\quad
\Curl \tau:=(\Curl\tau_1,\Curl\tau_2,\Curl\tau_3)^T,\quad d=3,$$
 $$\rot\beta:=\frac{\partial \beta_1}{\partial x_2}-\frac{\partial \beta_2}{\partial x_1},\quad
   \rot\tau:=(\rot\tau_1,\rot\tau_2)^T,\quad d=2.$$
Moreover, for the tensor function $\tau$, we define its tangential component by
$$\tau\cdot t:=(\tau_1\cdot t,\tau_2\cdot t)^T\mbox{~for~} d=2, \quad
\tau\times \nu:=(\tau_1\times \nu,\tau_2\times \nu,\tau_3\times \nu)^T\mbox{~for~} d=3.$$
For a given Lebesgue measurable set $G\subset \mathbb{R}^d$, we use $L^2(G;\mathbb{R})$ or $L^2(G;\mathbb{R}^{d\times n})$
to denote the Hilbert space of square integrable functions or matrix-value fields, respectively, with inner product $(\cdot,\cdot)_{L^2(G)}$.
 Here and thereafter we will omit $\mathbb{R}$ or $\mathbb{R}^{d\times n}$ for simplicity when there is no risk of confusion.

We also define the following spaces
$$H^1(G):=\{v\in L^2(G)| \grad v\in L^2(G)\},$$
$$H(\ddiv,G):=\{v\in L^2(G)| \ddiv v\in L^2(G)\},$$
$$H(\Curl,G):=\{v\in L^2(G)| \Curl v\in L^2(G)\},$$
equipped with norms
$$\|v\|_{H^1(G)}:=(\|v\|^2_{L^2(G)}+\|\grad v\|^2_{L^2(G)})^{1/2},\quad\mbox{for all } v\in H^1(G),$$
$$\|v\|_{H(\ddiv,G)}:=(\|v\|^2_{L^2(G)}+\|\ddiv v\|^2_{L^2(G)})^{1/2},\quad\mbox{for all } v\in H(\ddiv,G),$$
$$\|v\|_{H(\Curl,G)}:=(\|v\|^2_{L^2(G)}+\|\Curl v\|^2_{L^2(G)})^{1/2},\quad\mbox{for all } v\in H(\Curl,G),$$
respectively, where $\|\cdot\|_{L^2(G)}:=(\cdot,\cdot)_{L^2(G)}^{1/2}$ denotes the norm of space $L^2(G)$.
Especially, let  $H_0^1(G):=\{v\in H^1(G),v|_{\partial G}=0\}$.

The rest of the paper is organized as follows. In  Section  2, we present notation and five hypotheses. In Section 3, we show discrete reliability
and quasi-orthogonality under these five hypotheses.  In Section 4, we prove convergence and optimality of the adaptive algorithms while in Section 5
we  check these  five hypotheses  for two examples.

\section{Notation and Hypothesis}
Let $\mathcal{T}_h$ be some shape-regular triangulation of $\Omega$
and $\mathcal{E}_h$ the set of all edges or faces in $\mathcal{T}_h$.
We indicate by $h_K:=|K|^{1/d}$ and $h_E:=|E|^{1/(d-1)}$ the size for each $K\in\mathcal{T}_h$ and each $E\in\mathcal{E}_h$, respectively.
Note that all geometric entities are closed sets.  Given $K\in \mathcal{T}_h$ and $E\in \mathcal{E}_h$, define the element-patch and the edge/face-patch by
$$\Omega_K:=\bigcup\limits_{K'\cap K\ne\emptyset}K', \quad\Omega_E:=\bigcup\limits_{E\subset K}K, $$
 respectively.
Given any interior edge/face $E\in\mathcal{E}_h$,  let $\nu_E$ be  a unit normal vector,
and $[\cdot]|_E:=\cdot|_{K^+}-\cdot|_{K^-}$ be the jump across the edge/face $E=K^+\cap K^-$ shared by the two elements $K^+,K^-\in\mathcal{T}_h$.
While for the boundary edge/face $E$, $\nu_E$ denotes the unit outer vector normal to $\partial \Omega$, and the jump
$[\cdot]|_E:=\cdot|_{K^+}$ for the unique element $K^+$ with $E\subset K^+$. When $d=2$
let  $t_E$ be  the  unit tangential vector of $E$.

Let $\mathcal{T}_h$ be a refinement of $\mathcal{T}_H$, and
$\mathcal{R}:=\mathcal{T}_H\backslash\mathcal{T}_h=\{K\in\mathcal{T}_H|K\not\in\mathcal{T}_h\}$ be the set of refined elements from $\mathcal{T}_H$ to $\mathcal{T}_h,$ $\tilde{\mathcal{R}}:=\{K\in\mathcal{T}_H| K\cap K'\neq \emptyset \mbox{~for some~}K'\in\mathcal{R}\}$. Also,  let $U_h$ and $U_H$, $\Sigma_h$ and $\Sigma_H$, $H_h(\Curl,\Omega)$ and $H_H(\Curl,\Omega)$ be finite element subspaces of $U$, $\Sigma$ and $H(\Curl,\Omega)$ defined on  $\mathcal{T}_h$ and $\mathcal{T}_H$,  respectively.

The mixed finite element method is to solve \eqref{eq:UnifiedProblem} in the  pair of the finite dimensional spaces $\Sigma_h\times U_h\subset\Sigma\times U$.
The corresponding discrete problem reads: Find $(\sigma_h, u_h)\in \Sigma_h\times U_h$ such that
\begin{equation}\label{discreteproblem}
\begin{split}
(\A\sigma_h, \tau_h)_{L^2(\Omega)}-(\ddiv\tau_h, u_h)_{L^2(\Omega)}=0,\text{ for any }\tau_h\in \Sigma_h,\\[0,5ex]
(\ddiv\sigma_h,v_h)_{L^2(\Omega)}-(f,v_h)_{L^2(\Omega)}=0, \text{ for any }v_h\in U_h.
\end{split}
\end{equation}

Let $\mathcal{Q}_h$ be the $L^2$-projection operator from $U$ onto the space $U_h$.  The edge or face error estimator with respect to a given subset $\mathcal{M}_h\subseteq\mathcal{T}_h$ is defined by,  \cite{carstensen2011priori, huang2012convergence},
\begin{equation*}
\eta^2(\sigma_h,\mathcal{M}_h):
=\left\{\begin{array}{ll}
\sum\limits_{K\in\mathcal{M}_h}\left(\|h_K\rot(\A\sigma_h)\|^2_{L^2(K)}+\sum\limits_{E\in K\cap \mathcal{E}_h}\|h_E^{1/2}[\A\sigma_h\cdot t_E]\|_{L^2(E)}^2\right) &d=2,\\
\sum\limits_{K\in\mathcal{M}_h}\left(\|h_K\Curl(\A\sigma_h)\|^2_{L^2(K)}+\sum\limits_{E\in K \cap \mathcal{E}_h}\|h_E^{1/2}[\A\sigma_h\times\nu_E]\|_{L^2(E)}^2\right) &d=3.
\end{array}
\right.
\end{equation*}
And given $f\in L^2(\Omega)$, define the data oscillation by
$$\osc^2(f,\mathcal{M}_h):=\sum\limits_{K\in\mathcal{M}_h}\|h_K(f-f_h)\|^2_{L^2(K)}\text{ with } f_h=\mathcal{Q}_h f.$$
It follows that
$$
\osc^2(f_h,\mathcal{T}_H)=\sum\limits_{K\in\mathcal{T}_H\backslash\mathcal{T}_h}\|h_K(f_h-f_H)\|^2_{L^2(K)}.
$$
For each $K\in \mathcal{T}_H$, since $\|h_K(f_h-f_H)\|_{L^2(K)}=\|h_K \mathcal{Q}_h(f-f_H)\|_{L^2(K)}\le\|h_K(f-f_H)\|_{L^2(K)}$,
$$\osc^2(f_h,\mathcal{T}_H)\le\sum\limits_{K\in\mathcal{T}_H\backslash\mathcal{T}_h}\|h_K(f-f_H)\|^2_{L^2(K)}=
\osc^2(f,\mathcal{T}_H\backslash\mathcal{T}_h).$$

Given $f\in L^2(\Omega)$, define an affine space $Z_h(f)$ by
\begin{equation}
Z_h(f):=\{\tau_h\in\Sigma_h, \ddiv\tau_h=f_h\}.
\end{equation}
In particular, $Z_h(0)$ is the kernel space of the discrete  divergence operator, which is also called the discrete divergence free space.
 Let ($\sigma_h,u_h)$ be the solution of
\eqref{discreteproblem}, we then have the following key property
\begin{equation}\label{otho1}
(\A\sigma_h, \tau_h)_{L^2(\Omega)}=0, \text{ for any }\tau_h\in Z_h(0).
\end{equation}

 We follow \cite[Lemma 2.1]{arnold1982interior} to  endow the space $U_h$ with the following discrete
 $H^1$ norm: for a given set $G$ consisting of elements $K\in\mathcal{T}_h$,
 \begin{equation}\label{eq:DiscreteNorm}
 \|v_h\|_{1,h,G}^2:=\sum\limits_{K\in G\cap\mathcal{T}_h}\|\nabla v_h\|_{L^2(K)}^2+\sum\limits_{E\in G\cap\mathcal{E}_h}h_E^{-1}\|[v_h]\|_{L^2(E)}^2.
 \end{equation}
 When $G=\Omega$, the subscript is omitted.  Hence the naturally matched  norm for the space $\Sigma_h$ is  the the $L^2$ norm.

Next, we propose five hypotheses, which are  sufficient for  convergence and optimality of AMFEMs.

 \begin{hypothesis}\label{H1}
 The discrete spaces $\Sigma_h$ and $U_h$ satisfy the following inclusion properties
$$
\Sigma_H\subset\Sigma_h\text{ and } \ddiv \Sigma_h \subset U_h.$$
\end{hypothesis}
\begin{hypothesis} \label{H2} The pair of spaces $(\Sigma_h,U_h)$  satisfies the discrete inf-sup condition
$$\|v_h\|_{1,h}\lesssim \sup\limits_{0\not=\tau_h\in\Sigma_h}\frac{(\ddiv\tau_h,v_h)_{L^2(\Omega)}}{\|\tau_h\|_{L^2(\Omega)}}, \text{ for any }
 v_h\in U_h,$$
 which  implies the following  equivalent inf-sup condition, see Brezzi and Fortin \cite{brezzi1991mixed} for more details,
\begin{equation}\label{einfsup}
\|\tau_h\|_{L^2(\Omega)/ Z_h(0)}\lesssim \sup\limits_{0\not=v_h\in U_h}\frac{(\ddiv\tau_h,v_h)_{L^2(\Omega)}}{\|v_h\|_{1,h}}, \text{ for any }
 \tau_h\in \Sigma_h.
 \end{equation}
 \end{hypothesis}

\begin{hypothesis}\label{H3}
Given $v_h\in U_h$, there exists an operator $\mathcal{S}_H: U_h\rightarrow U_H$ such that
\begin{equation}
\mathcal{S}_H v_h|_K=v_h|_K,\text{ for any } K\in \mathcal{T}_H\cap\mathcal{T}_h,
\end{equation}
and
\begin{equation}
\|v_h-\mathcal{S}_H v_h\|_{L^2(K)}\lesssim h_K \|v_h\|_{1,h,D_K}, \text{ for any } K\in \mathcal{T}_H\backslash\mathcal{T}_h,
\end{equation}
where $D_K:=\bigcup_{K'\in \mathcal{T}_h, K'\cap K\ne\emptyset}K'$.
\end{hypothesis}

\begin{hypothesis}\label{H4}
 Given $\xi_h\in\Sigma_h$ with $\ddiv\xi_h=0$, there exist $\varphi_h\in H_h(\Curl,\Omega)$ and
 an operator $\Pi_H: H_h(\Curl,\Omega)\rightarrow H_H(\Curl,\Omega)$ such that
  \begin{equation}\label{eq: CommutativeProperty}
 \xi_h=\Curl\varphi_h \text{~and~} \Curl \Pi_H \varphi_h\in\Sigma_H.
 \end{equation}
 Moreover, there exist $\psi\in H^1(\Omega)$ and $\phi\in L^2(\Omega)$ such that
 \begin{equation}\label{eq: decompositon}
\Curl(\varphi_h-\Pi_H\varphi_h)=\Curl\psi+\phi
\end{equation}
with $(\A\sigma_H,\phi)_{L^2(\Omega)}=0$ ($(\sigma_H, u_H)$ is
the solution to the discrete problem \eqref{discreteproblem} over $\mathcal{T}_H$) and
\begin{equation}\label{eq: Interpolation2}
\left\{\begin{array}{l}
\psi|_K=0,\quad\mbox{~for~any~} K\in\mathcal{T}_H\backslash\tilde{\mathcal{R}},\smallskip\\
\left(\sum_{K\in\mathcal{T}_H}\|h_K^{-1}\psi\|^2_{L^2(K)}\right)^{1/2}\lesssim  \|\Curl\varphi_h\|_{L^2(\Omega)},\smallskip\\
\left(\sum_{E\in\mathcal{E}_H}\|h_E^{-1/2}\psi\|^2_{L^2(E)}\right)^{1/2}\lesssim \|\Curl\varphi_h\|_{L^2(\Omega)}.
\end{array}\right.
\end{equation}

\end{hypothesis}

Besides these hypotheses on the finite element subspaces,
the a posteriori error estimator with reliability and efficiency is necessary in an adaptive algorithm, which will be described in the following Hypothesis 5.
\begin{hypothesis}\label{lemma:PostreioriEstimation}
Let $(\sigma,u)$ be the solution of \eqref{eq:UnifiedProblem} and $(\sigma_h,u_h)$ be the solution of \eqref{discreteproblem} over a triangulation $\mathcal{T}_h$,
 there exist constants $C_{Rel}$ and $C_{Eff}$ depending on the shape regularity of $\mathcal{T}_h$ such that
\begin{equation}\label{eq: UpperBound}
\|\sigma-\sigma_h\|^2_{\A}\le C_{Rel} (\eta^2(\sigma_h,\mathcal{T}_h)+\osc^2(f,\mathcal{T}_h)),  \quad\mbox{(Reliability)}
\end{equation}
\begin{equation}\label{eq: LowerBound}
C_{Eff}\eta^2(\sigma_h,\mathcal{T}_h)\le \|\sigma-\sigma_h\|^2_{\A}.  \quad\mbox{(Efficiency)}
\end{equation}
\end{hypothesis}

We will in Section 5 show that the Raviart--Thomas and the Brezzi--Douglas--Marini elements for the Poisson  and  Stokes problems satisfy
Hypotheses \ref{H1}--\ref{lemma:PostreioriEstimation} in both two and three dimensions.   We assume  in the next  two sections  that these five
hypotheses hold.

\section{Discrete reliability and quasi-orthogonality}
In this section we analyze discrete reliability of the estimator $\eta$, and also show quasi-orthogonality under the previous hypotheses.  Compared to
the analysis of both discrete reliability and  quasi-orthogonality in literature, see for instance, \cite{becker2008optimally,chen2009convergence, carstensen2011optimal, du2012error,huang2012convergence,carstensen2013quasi},  the novelty  of the analysis here
  is to  equip  the discrete displacement space $U_h$ with the discrete $H^1$ norm  defined in \eqref{eq:DiscreteNorm}.  Then it is natural to  endow the discrete stress space $\Sigma_h$ with the $L^2$ norm.  Moreover, it allows us to make use of the  equivalent form of the inf-sup condition \eqref{einfsup}.

\begin{thm}\label{thm:DiscreteUpperBound}
Given $f\in L^2(\Omega)$, let $(\sigma_h, u_h)$  and $(\sigma_H, u_H)$ be
the solutions to the discrete problem \eqref{discreteproblem} over the nested triangulations $\mathcal{T}_h$ and  $\mathcal{T}_H$ respectively. Then there exists a constant $C_{Drel}$ such that
\begin{equation}
\|\sigma_h-\sigma_H\|^2_{\A}\le C_{Drel}
\left(\eta^2(\sigma_H,\tilde{\mathcal{R}})+\osc^2(f, \mathcal{T}_H\backslash\mathcal{T}_h)\right).
\end{equation}
\end{thm}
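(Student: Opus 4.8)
The plan is to split the error $\sigma_h-\sigma_H$, which lies in $\Sigma_h$ since $\Sigma_H\subseteq\Sigma_h$ (Hypothesis \ref{H1}), into a discrete divergence free part and a remainder driven by the mismatch of the two data projections, and then to bound the divergence free part by pairing the Galerkin orthogonality \eqref{otho1} on both meshes with the discrete Helmholtz-type structure of Hypothesis \ref{H4}. From \eqref{discreteproblem} together with $\ddiv\Sigma_h\subseteq U_h$ one has $\ddiv\sigma_h=f_h$ and $\ddiv\sigma_H=f_H$, hence $\ddiv(\sigma_h-\sigma_H)=f_h-f_H\in\ddiv\Sigma_h$. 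By the Babuška--Brezzi form of the equivalent inf--sup condition \eqref{einfsup} (cf. \cite{brezzi1991mixed}) there exists $\delta_h\in\Sigma_h$, orthogonal in $L^2(\Omega)$ to $Z_h(0)$, with $\ddiv\delta_h=f_h-f_H$ and
$$\|\delta_h\|_{L^2(\Omega)}\lesssim\sup_{0\neq v_h\in U_h}\frac{(f_h-f_H,v_h)_{L^2(\Omega)}}{\|v_h\|_{1,h}}.$$
Setting $\xi_h:=\sigma_h-\sigma_H-\delta_h\in Z_h(0)$, the bound \eqref{coe} gives $\|\xi_h\|_{L^2(\Omega)}\lesssim\|\xi_h\|_{\A}$ (using $\ddiv\xi_h=0$) and $\|\delta_h\|_{\A}\lesssim\|\delta_h\|_{L^2(\Omega)}$; by $\|\sigma_h-\sigma_H\|_{\A}\le\|\xi_h\|_{\A}+\|\delta_h\|_{\A}$ it therefore suffices to estimate $\|\delta_h\|_{L^2(\Omega)}$ and $\|\xi_h\|_{\A}$.

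To estimate $\|\delta_h\|_{L^2(\Omega)}$ I would use that $f_h-f_H$ vanishes on every unrefined element $K\in\mathcal{T}_H\cap\mathcal{T}_h$, together with the operator $\mathcal{S}_H$ of Hypothesis \ref{H3}: since $\mathcal{S}_Hv_h=v_h$ on such $K$ and $(f_h-f_H,\mathcal{S}_Hv_h)_{L^2(\Omega)}=0$ (because $\mathcal{S}_Hv_h\in U_H\subseteq U_h$ and $f_h,f_H$ are the $L^2$ projections of $f$ onto $U_h,U_H$), one has $(f_h-f_H,v_h)_{L^2(\Omega)}=\sum_{K\in\mathcal{T}_H\backslash\mathcal{T}_h}(f_h-f_H,v_h-\mathcal{S}_Hv_h)_{L^2(K)}$. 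A Cauchy--Schwarz estimate on each $K$, the bound $\|v_h-\mathcal{S}_Hv_h\|_{L^2(K)}\lesssim h_K\|v_h\|_{1,h,D_K}$ of Hypothesis \ref{H3}, the inequality $\|h_K(f_h-f_H)\|_{L^2(K)}\le\|h_K(f-f_H)\|_{L^2(K)}$ recorded in Section 2, and the bounded overlap of the patches $D_K$ then yield $\|\delta_h\|_{L^2(\Omega)}\lesssim\osc(f,\mathcal{T}_H\backslash\mathcal{T}_h)$, hence also $\|\delta_h\|_{\A}\lesssim\osc(f,\mathcal{T}_H\backslash\mathcal{T}_h)$.

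For $\|\xi_h\|_{\A}$, since $\xi_h\in Z_h(0)$ the orthogonality \eqref{otho1} on $\mathcal{T}_h$ gives $(\A\sigma_h,\xi_h)_{L^2(\Omega)}=0$, so
$$\|\xi_h\|_{\A}^2=(\A(\sigma_h-\sigma_H-\delta_h),\xi_h)_{L^2(\Omega)}=-(\A\sigma_H,\xi_h)_{L^2(\Omega)}-(\A\delta_h,\xi_h)_{L^2(\Omega)},$$
with $|(\A\delta_h,\xi_h)_{L^2(\Omega)}|\le\|\delta_h\|_{\A}\|\xi_h\|_{\A}$. For the principal term, Hypothesis \ref{H4} provides $\varphi_h$ and $\Pi_H$ with $\xi_h=\Curl\varphi_h$ and $\Curl\Pi_H\varphi_h\in\Sigma_H$; since $\ddiv\Curl\Pi_H\varphi_h=0$ we have $\Curl\Pi_H\varphi_h\in Z_H(0)$, so \eqref{otho1} on $\mathcal{T}_H$ gives $(\A\sigma_H,\Curl\Pi_H\varphi_h)_{L^2(\Omega)}=0$, whence $(\A\sigma_H,\xi_h)_{L^2(\Omega)}=(\A\sigma_H,\Curl(\varphi_h-\Pi_H\varphi_h))_{L^2(\Omega)}$. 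Inserting the decomposition \eqref{eq: decompositon} and using $(\A\sigma_H,\phi)_{L^2(\Omega)}=0$ reduces this to $(\A\sigma_H,\Curl\psi)_{L^2(\Omega)}$. An elementwise integration by parts over $\mathcal{T}_H$, in which the single-valuedness of $\psi\in H^1(\Omega)$ across faces turns the interior face contributions into the tangential jumps $[\A\sigma_H\cdot t_E]$ for $d=2$ (resp. $[\A\sigma_H\times\nu_E]$ for $d=3$), the boundary face contributions being of the same type by the jump convention, gives for $d=2$
$$(\A\sigma_H,\Curl\psi)_{L^2(\Omega)}=\sum_{K\in\mathcal{T}_H}(\rot(\A\sigma_H),\psi)_{L^2(K)}-\sum_{E\in\mathcal{E}_H}([\A\sigma_H\cdot t_E],\psi)_{L^2(E)},$$
and analogously for $d=3$. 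Since $\psi$ vanishes outside $\tilde{\mathcal{R}}$ by \eqref{eq: Interpolation2}, only elements and faces of $\tilde{\mathcal{R}}$ remain; Cauchy--Schwarz with the weights $h_K$ and $h_E^{1/2}$, followed by the two bounds for $\psi$ in \eqref{eq: Interpolation2}, yields $|(\A\sigma_H,\xi_h)_{L^2(\Omega)}|\lesssim\eta(\sigma_H,\tilde{\mathcal{R}})\,\|\Curl\varphi_h\|_{L^2(\Omega)}=\eta(\sigma_H,\tilde{\mathcal{R}})\,\|\xi_h\|_{L^2(\Omega)}\lesssim\eta(\sigma_H,\tilde{\mathcal{R}})\,\|\xi_h\|_{\A}$.

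Collecting these estimates gives $\|\xi_h\|_{\A}^2\lesssim\bigl(\eta(\sigma_H,\tilde{\mathcal{R}})+\osc(f,\mathcal{T}_H\backslash\mathcal{T}_h)\bigr)\|\xi_h\|_{\A}$, hence $\|\xi_h\|_{\A}\lesssim\eta(\sigma_H,\tilde{\mathcal{R}})+\osc(f,\mathcal{T}_H\backslash\mathcal{T}_h)$; adding $\|\delta_h\|_{\A}$ and squaring produce the asserted inequality with a suitable $C_{Drel}$. The delicate point is the isolation of the genuinely discrete divergence free field $\xi_h$: this is exactly what makes \eqref{otho1} on $\mathcal{T}_h$ applicable to it, lets Hypothesis \ref{H4} supply a potential and the coarse-mesh orthogonality, and allows the $\A$-seminorm to control $\|\xi_h\|_{L^2(\Omega)}$ via \eqref{coe}; the ensuing integration by parts is routine but must be done so that precisely the faces counted in $\eta(\sigma_H,\tilde{\mathcal{R}})$ appear, and the $\delta_h$-part must be matched with the operator $\mathcal{S}_H$ so as to recover data oscillation rather than a bare $L^2$ norm of $f_h-f_H$.
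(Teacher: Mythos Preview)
Your argument is correct and shares the paper's overall strategy: split $\sigma_h-\sigma_H$ into a discrete divergence-free part and a remainder, control the remainder by $\osc(f,\mathcal{T}_H\backslash\mathcal{T}_h)$ via Hypotheses~\ref{H2}--\ref{H3}, and control the divergence-free part by $\eta(\sigma_H,\tilde{\mathcal{R}})$ via Hypothesis~\ref{H4} together with \eqref{otho1} on both meshes and an elementwise integration by parts localized by $\psi|_{\mathcal{T}_H\setminus\tilde{\mathcal{R}}}=0$.

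The one genuine difference is the splitting itself. The paper defines $\xi_h$ as the $\A$-projection of $\sigma_H$ onto $Z_h(0)$, i.e.\ $(\A\xi_h,\tau_h)=(\A\sigma_H,\tau_h)$ for all $\tau_h\in Z_h(0)$, and then estimates $\|\sigma_h-\sigma_H+\xi_h\|_{\A}$ and $\|\xi_h\|_{\A}$ separately. This choice yields the clean identity $\|\xi_h\|_{\A}^2=(\A\sigma_H,\xi_h)$ with no cross term, and is singled out in the introduction as one of the paper's new ingredients. You instead take the $L^2$-orthogonal decomposition $\sigma_h-\sigma_H=\xi_h+\delta_h$ with $\delta_h\perp_{L^2}Z_h(0)$; this is a more standard Brezzi-type lifting, but it produces the extra term $(\A\delta_h,\xi_h)$ that you must absorb by $\|\delta_h\|_{\A}\|\xi_h\|_{\A}$. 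Both routes arrive at the same estimate; the paper's $\A$-projection is slightly cleaner, while your version avoids introducing a new projection and relies only on the standard quotient-norm realization of \eqref{einfsup}. Everything downstream---the use of $\mathcal{S}_H$ to localize $(f_h-f_H,v_h)$, the reduction of $(\A\sigma_H,\xi_h)$ to $(\A\sigma_H,\Curl\psi)$ via \eqref{eq: decompositon}, and the integration by parts over $\tilde{\mathcal{R}}$---matches the paper.
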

\begin{proof}  The main idea is to evoke Hypothesis \ref{H2} and Hypothesis \ref{H4} to control  the discrete divergence free part and its orthogonal
  complementary, separately.  To this end, let $\xi_h$ be the $\A$ projection of $\sigma_H$ onto the space $Z_h(0)$  with
$$(\A\xi_h,\tau_h)_{L^2(\Omega)}=(\A\sigma_H,\tau_h)_{L^2(\Omega)}, \text{ for any } \tau_h\in Z_h(0).$$
By \eqref{otho1}, the error  $\|\sigma_h-\sigma_H\|^2_{\A}$ admits the following decomposition:
 \begin{equation}\label{0}
 \|\sigma_h-\sigma_H\|^2_{\A}=\|\sigma_h-\sigma_H+\xi_h-\xi_h\|^2_{\A}\lesssim \|\sigma_h-\sigma_H+\xi_h\|^2_{\A}+\|\xi_h\|^2_{\A}.
 \end{equation}
  Since
  $$(\A(\sigma_h-\sigma_H+\xi_h),\tau_h)_{L^2(\Omega)}=(\A\sigma_h,\tau_h)_{L^2(\Omega)}-(\A(\sigma_H-\xi_h),\tau_h)_{L^2(\Omega)}=0,$$
  for any  $\tau_h\in Z_h(0)$, it follows  from \eqref{coe} and Hypothesis \ref{H2} that
 \begin{equation}\label{3}
 \begin{split}
 \|\sigma_h-\sigma_H+\xi_h\|_{\A}&\lesssim\|\sigma_h-\sigma_H+\xi_h\|_{L^2(\Omega)/Z_h(0)}\\
 &\lesssim \sup\limits_{v_h\in U_h}\frac{\left(\ddiv(\sigma_h-\sigma_H+\xi_h),v_h\right)_{L^2(\Omega)}}{\|v_h\|_{1,h}}\\
 &=        \sup\limits_{v_h\in U_h}\frac{(f_h-f_H,v_h)_{L^2(\Omega)}}{\|v_h\|_{1,h}}.
 \end{split}
 \end{equation}
 In the last equation we use $\ddiv \sigma_h=f_h$, $\ddiv \sigma_H=f_H$, $\ddiv \xi_h=0$, which are direct results of Hypothesis \ref{H1}.
 Moreover,
\begin{equation}\label{4}
 \begin{split}
 (f_h-f_H,v_h)_{L^2(\Omega)}&=(f_h-f_H,v_h-\mathcal{S}_H v_h)_{L^2(\Omega)}\\
 &=\sum\limits_{K\in\mathcal{T}_H\backslash\mathcal{T}_h}(f_h-f_H,v_h-\mathcal{S}_H v_h)_{L^2(K)}\\
 \end{split}
 \end{equation}
 By Hypothesis \ref{H3}, a combination of \eqref{3} and \eqref{4} implies
 \begin{equation}\label{5}
 \|\sigma_h-(\sigma_H-\xi_h)\|^2_{\A}\lesssim\sum\limits_{K\in\mathcal{T}_H\backslash\mathcal{T}_h}\|h_K(f-f_H)\|^2_{L^2(K)}.
 \end{equation}
Next, we analyze the second term on the right-hand side of \eqref{0}. By the definition of $\xi_h$,  and \eqref{otho1},
\begin{equation}\label{eq: eq0}
 \|\xi_h\|_{\A}^2=(\A\xi_h,\xi_h)_{L^2(\Omega)}=(\A\sigma_H,\xi_h)_{L^2(\Omega)}=(\A\sigma_H,\xi_h-\tau_H)_{L^2(\Omega)},
 \end{equation}
 for any $\tau_H\in Z_H(0)$.  Since $\ddiv\xi_h=0$,  it follows from  Hypothesis \ref{H4} that there exists $\varphi_h\in H_h(\Curl,\Omega)$ such that $\xi_h=\Curl\varphi_h$. Hence the decomposition from \eqref{eq: decompositon} with  $\tau_H=\Curl \Pi_H\varphi_h\in\Sigma_H$ implies that there exist $\psi\in H(\Curl,\Omega)$ and $\phi\in L^2(\Omega)$ such that
 \begin{equation}\label{eq: eq1}
 \xi_h-\tau_H=\Curl\left(\varphi_h-\Pi_H\varphi_h\right)=\Curl\psi+\phi,
 \end{equation}
with $(\A\sigma_H,\phi)=0$ and $\psi$ satisfying \eqref{eq: Interpolation2}.
A summary of  \eqref{eq: eq0}, \eqref{eq: eq1} and \eqref{eq: Interpolation2} leads to
 \begin{equation}\label{divergencefree}
 \|\xi_h\|_{\A}^2=(\A\sigma_H,\Curl\psi)_{L^2(\Omega)}=(\A\sigma_H,\Curl\psi)_{\tilde{\mathcal{R}}}.
 \end{equation}
 We need to  estimate the term on the right hand side of \eqref{divergencefree}.  For the 2D case, an integration by parts plus \eqref{eq: Interpolation2} yield
\begin{equation*}
 \begin{split}
 \|\xi_h\|_{\A}^2
= &-\sum\limits_{K\in \tilde{\mathcal{R}}}\bigg(
       (\rot(\A\sigma_H),\psi)_{L^2(K)}+\sum_{E\in K\cap\mathcal{E}_H}\int_{E}\A\sigma_H\cdot t_E\psi ds\bigg)\\
\lesssim &\left(\sum\limits_{K\in\tilde{\mathcal{R}}}\bigg(\|h_K\rot(\A\sigma_H)\|^2_{L^2(K)}
      +\sum\limits_{E\in K\cap\mathcal{E}_H}\|h_E^{1/2}[\A\sigma_H\cdot t_E]\|_{L^2(E)}^2\bigg)\right)^{1/2}\|\xi_h\|_{L^2(\Omega)}.
 \end{split}
 \end{equation*}
 Since $\ddiv\xi_h=0$, it follows from \eqref{coe} that
 $$
 \|\xi_h\|_{L^2(\Omega)}\lesssim\|\xi_h\|_{\A},
 $$
 which  immediately implies
 \begin{equation}\label{6-1}
 \|\xi_h\|^2_{\A}\lesssim \sum\limits_{K\in\tilde{\mathcal{R}}}\bigg(\|h_K\rot(\A\sigma_H)\|^2_{L^2(K)}
      +\sum\limits_{E\in K\cap\mathcal{E}_H}\|h_E^{1/2}[\A\sigma_H\cdot t_E]\|_{L^2(E)}^2\bigg).
 \end{equation}
For the 3D case, a similar argument shows
\begin{equation*}
 \begin{split}
 \|\xi_h\|_{\A}^2
= &-\sum\limits_{K\in\tilde{\mathcal{R}}}\bigg(
       (\Curl(\A\sigma_H),\psi)_{L^2(K)}+\sum_{E\in K\cap\mathcal{E}_H}\int_{E}\A\sigma_H\times \nu_E\psi ds\bigg)\\
\lesssim &\left(\sum\limits_{K\in\tilde{\mathcal{R}}}\bigg(\|h_K\Curl(\A\sigma_H)\|^2_{L^2(K)}
      +\sum\limits_{E\in K\cap\mathcal{E}_H}\|h_E^{1/2}[\A\sigma_H\times\nu_E]\|_{L^2(E)}^2\bigg)\right)^{1/2}\|\xi_h\|_{\A}.
 \end{split}
 \end{equation*}
 Therefore,
 \begin{equation}\label{6-2}
  \|\xi_h\|^2_{\A}\lesssim \sum\limits_{K\in\tilde{\mathcal{R}}}\bigg(\|h_K\Curl(\A\sigma_H)\|^2_{L^2(K)}
      +\sum\limits_{E\in K\cap\mathcal{E}_H}\|h_E^{1/2}[\A\sigma_H\times\nu_E]\|_{L^2(E)}^2\bigg).
 \end{equation}
 Finally,  the desired result follows from \eqref{0}, \eqref{5}, \eqref{6-1} and \eqref{6-2}.
\end{proof}

To establish quasi-orthogonality, we follow the idea of  \cite{chen2009convergence} to introduce the following problem: Find $(\tilde{\sigma}_h,\tilde{u}_h)\in\Sigma_h\times U_h$ such that
\begin{equation}\label{discreteproblem-inter}
\begin{split}
(\A\tilde{\sigma}_h, \tau_h)_{L^2(\Omega)}-(\ddiv\tau_h, \tilde{u}_h)_{L^2(\Omega)}=0,\text{ for any }\tau_h\in \Sigma_h,\\[0,5ex]
(\ddiv\tilde{\sigma}_h,v_h)_{L^2(\Omega)}-(f_H,v_h)_{L^2(\Omega)}=0, \text{ for any }v_h\in U_h.
\end{split}
\end{equation}

\begin{lem}\label{lemma:DiscreteStablity}
Given $f\in L^2(\Omega)$, let $(\sigma, u)$ be the solution of \eqref{eq:UnifiedProblem}, $(\sigma_h, u_h)$  and $(\sigma_H, u_H)$ be
the solutions to the discrete problem \eqref{discreteproblem} over the nested triangulations $\mathcal{T}_h$ and  $\mathcal{T}_H$ respectively,
and let $(\tilde{\sigma}_h,\tilde{u}_h)$ be the solution of \eqref{discreteproblem-inter}.
Then
\begin{equation}\label{eq:DiscreteStablity}
(\A(\sigma-\sigma_h),\tilde{\sigma}_h-\sigma_H)_{L^2(\Omega)}=0,
\end{equation}
\begin{equation}\label{Quasi-Orthogonality0}
\|\sigma_h-\tilde{\sigma}_h\|_{\A}\le \sqrt{C_0} \osc(f, \mathcal{T}_H\backslash\mathcal{T}_h).
\end{equation}
\end{lem}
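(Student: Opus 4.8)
The plan is to treat the two assertions separately: \eqref{eq:DiscreteStablity} by a Galerkin-orthogonality argument on the discrete divergence free space $Z_h(0)$, and \eqref{Quasi-Orthogonality0} by combining the discrete inf--sup condition of Hypothesis \ref{H2} with the quasi-interpolation operator $\mathcal{S}_H$ of Hypothesis \ref{H3}.

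For \eqref{eq:DiscreteStablity}, I would first record that Hypothesis \ref{H1} (together with the nestedness of the displacement spaces, so that $\ddiv\Sigma_h\subset U_h$ and $f_H\in U_h$) forces $\ddiv\sigma_h=f_h$, $\ddiv\sigma_H=f_H$ and $\ddiv\tilde\sigma_h=f_H$; since $\sigma_H\in\Sigma_H\subset\Sigma_h$, this shows $\tilde\sigma_h-\sigma_H\in\Sigma_h$ with $\ddiv(\tilde\sigma_h-\sigma_H)=0$, i.e. $\tilde\sigma_h-\sigma_H\in Z_h(0)$. It then suffices to prove $(\A(\sigma-\sigma_h),\tau_h)_{L^2(\Omega)}=0$ for every $\tau_h\in Z_h(0)$: testing the first equation of \eqref{eq:UnifiedProblem} with $\tau_h\in Z_h(0)\subset\Sigma$ and using $\ddiv\tau_h=0$ gives $(\A\sigma,\tau_h)_{L^2(\Omega)}=0$, while $(\A\sigma_h,\tau_h)_{L^2(\Omega)}=0$ is exactly \eqref{otho1}; subtracting these and inserting $\tau_h=\tilde\sigma_h-\sigma_H$ proves \eqref{eq:DiscreteStablity}.

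For \eqref{Quasi-Orthogonality0}, subtracting the first equations of \eqref{discreteproblem} and \eqref{discreteproblem-inter} gives $(\A(\sigma_h-\tilde\sigma_h),\tau_h)_{L^2(\Omega)}=(\ddiv\tau_h,u_h-\tilde u_h)_{L^2(\Omega)}$ for all $\tau_h\in\Sigma_h$. Choosing $\tau_h=\sigma_h-\tilde\sigma_h$, whose divergence equals $f_h-f_H$, yields $\|\sigma_h-\tilde\sigma_h\|_{\A}^2=(f_h-f_H,u_h-\tilde u_h)_{L^2(\Omega)}$. Now $f_h-f_H=\mathcal{Q}_hf-\mathcal{Q}_Hf$ is $L^2$-orthogonal to $U_H$ and vanishes on every unrefined element, so subtracting $\mathcal{S}_H(u_h-\tilde u_h)\in U_H$ leaves a sum only over $\mathcal{T}_H\backslash\mathcal{T}_h$; a Cauchy--Schwarz estimate, the approximation bound of Hypothesis \ref{H3}, and the finite overlap of the patches $D_K$ then give $\|\sigma_h-\tilde\sigma_h\|_{\A}^2\lesssim\osc(f_h,\mathcal{T}_H)\,\|u_h-\tilde u_h\|_{1,h}\le\osc(f,\mathcal{T}_H\backslash\mathcal{T}_h)\,\|u_h-\tilde u_h\|_{1,h}$, using the oscillation inequality recorded just before this section. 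Finally I would absorb the last factor: Hypothesis \ref{H2} gives $\|u_h-\tilde u_h\|_{1,h}\lesssim\sup_{0\ne\tau_h\in\Sigma_h}(\ddiv\tau_h,u_h-\tilde u_h)_{L^2(\Omega)}/\|\tau_h\|_{L^2(\Omega)}$, and by the identity above this supremum equals $\sup_{\tau_h}(\A(\sigma_h-\tilde\sigma_h),\tau_h)_{L^2(\Omega)}/\|\tau_h\|_{L^2(\Omega)}$, which by \eqref{coe} is $\lesssim\|\sigma_h-\tilde\sigma_h\|_{\A}$; dividing through by $\|\sigma_h-\tilde\sigma_h\|_{\A}$ gives \eqref{Quasi-Orthogonality0} with $C_0$ the square of the accumulated constant.

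The routine parts are the two subtractions of the mixed systems and the Cauchy--Schwarz bookkeeping; the point I would spell out most carefully is precisely why $(f_h-f_H,u_h-\tilde u_h)$ localizes to the refined region and is controlled by \emph{data oscillation} rather than merely by $\|u_h-\tilde u_h\|_{L^2(\Omega)}$. This is where the choice of the mesh-dependent discrete $H^1$ norm on $U_h$ — and hence the availability of $\mathcal{S}_H$ and of the inf--sup condition in the form \eqref{einfsup} — is essential, and it is the only genuine obstacle.
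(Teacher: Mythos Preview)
Your argument is correct. For \eqref{eq:DiscreteStablity} your proof is essentially identical to the paper's: both hinge on $\tilde\sigma_h-\sigma_H\in Z_h(0)$ and on $(\A(\sigma-\sigma_h),\tau_h)=0$ for $\tau_h\in Z_h(0)$; the paper writes this last orthogonality out as $(u-u_h,\ddiv(\tilde\sigma_h-\sigma_H))=(u-u_h,f_H-f_H)=0$, which is just your computation expanded.

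For \eqref{Quasi-Orthogonality0} you take a slightly different route. The paper never introduces $u_h-\tilde u_h$: it observes that $(\A(\sigma_h-\tilde\sigma_h),\tau_h)=0$ on $Z_h(0)$, uses \eqref{coe} to pass to the quotient norm $\|\sigma_h-\tilde\sigma_h\|_{L^2(\Omega)/Z_h(0)}$, and then applies the \emph{equivalent} inf--sup condition \eqref{einfsup} directly, arriving at $\sup_{v_h}(f_h-f_H,v_h)/\|v_h\|_{1,h}$, which is bounded via Hypothesis~\ref{H3} exactly as you do. You instead test the difference of the first equations with $\sigma_h-\tilde\sigma_h$ to get $\|\sigma_h-\tilde\sigma_h\|_{\A}^2=(f_h-f_H,u_h-\tilde u_h)$, apply Hypothesis~\ref{H3} to that specific $v_h=u_h-\tilde u_h$, and then close the loop by bounding $\|u_h-\tilde u_h\|_{1,h}$ through the \emph{primal} inf--sup condition together with $(\ddiv\tau_h,u_h-\tilde u_h)=(\A(\sigma_h-\tilde\sigma_h),\tau_h)\lesssim\|\sigma_h-\tilde\sigma_h\|_{\A}\|\tau_h\|_{L^2(\Omega)}$. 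Both routes use exactly the same ingredients (orthogonality on $Z_h(0)$, the inf--sup pair in Hypothesis~\ref{H2}, and the localisation via $\mathcal{S}_H$); the paper's version is one step shorter because the equivalent inf--sup form \eqref{einfsup} absorbs the detour through $u_h-\tilde u_h$ that you make explicit.
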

\begin{proof}
By  Hypothesis \ref{H1}, the definitions of $\sigma$, $\sigma_h$, $\tilde{\sigma}_h$ and $\sigma_H$ imply \eqref{eq:DiscreteStablity} directly via
\begin{equation*}
\begin{split}
(\A(\sigma-\sigma_h),\tilde{\sigma}_h-\sigma_H)_{L^2(\Omega)}
&=(u-u_h,\ddiv(\tilde{\sigma}_h-\sigma_H))_{L^2(\Omega)}\\
&=(u-u_h,f_H-f_H)_{L^2(\Omega)}=0,
\end{split}
\end{equation*}
which shows \eqref{eq:DiscreteStablity}.

Since by \eqref{otho1} $(\A\sigma_h,\tau_h)_{L^2(\Omega)}=0$ and $(\A\tilde{\sigma}_h,\tau_h)_{L^2(\Omega)}=0$ for any $\tau_h \in Z_h(0)$,
\begin{equation}
(\A(\sigma_h-\tilde{\sigma}_h),\tau_h)_{L^2(\Omega)}=0, \text{ for any } \tau_h\in Z_h(0),
\end{equation}
which, along with the relations $\ddiv\sigma_h=f_h$, $\ddiv\tilde{\sigma}_h=f_H$ which follow from  Hypothesis \ref{H1}, the estimate \eqref{4} through Hypothesis \ref{H3}, implies
\begin{equation}
 \begin{split}
 \|\sigma_h-\tilde{\sigma}_h\|_{\A}&\lesssim\|\sigma_h-\tilde{\sigma}_h\|_{L^2(\Omega)/Z_h(0)}\\
 &\lesssim \sup\limits_{v_h\in U_h}\frac{(\ddiv(\sigma_h-\tilde{\sigma}_h),v_h)_{L^2(\Omega)}}{\|v_h\|_{1,h}}\\
 &=        \sup\limits_{v_h\in U_h}\frac{(f_h-f_H,v_h)_{L^2(\Omega)}}{\|v_h\|_{1,h}}\\
 &\lesssim \bigg(\sum\limits_{K\in\mathcal{T}_H\backslash\mathcal{T}_h}\|h_K(f-f_H)\|^2_{L^2(K)}\bigg)^{1/2}.
 \end{split}
 \end{equation}
 This proves \eqref{Quasi-Orthogonality0}.
\end{proof}
%
%\noindent {\bf Denotation:}\\
%$$\osc(f,\mathcal{T}_H):=\sum\limits_{K\in\mathcal{T}_H}\|h_K(f-f_H)\|^2_{L^2(K)},$$
%$$\mbox{then ~~~~}\osc(f_h,\mathcal{T}_H):=\sum\limits_{K\in\mathcal{T}_H\backslash\mathcal{T}_h}\|h_K(f-f_H)\|^2_{L^2(K)}.$$
%$$\eta_E^2(\sigma_H):=\|h_K\Curl(\A\sigma_H)\|^2_{L^2(\Omega_E)}+\|h_K^{1/2}[\A\sigma_H\times\nu_E]\|_{L^2(E)}^2.$$
%$$\eta^2(\sigma_H,\varepsilon_H):=\sum\limits_{E\in\varepsilon_H}\eta_E^2(\sigma_H).$$

\begin{thm}\label{thm:Quasi-Orthogonality}
Given $f\in L^2(\Omega)$, let $(\sigma, u)$ be the solution of \eqref{eq:UnifiedProblem}, $(\sigma_h, u_h)$  and $(\sigma_H, u_H)$ be
the solutions to the discrete problem \eqref{discreteproblem} over the nested triangulations $\mathcal{T}_h$ and  $\mathcal{T}_H$ respectively.
Then
\begin{equation}\label{Quasi-Orthogonality1}
(\A(\sigma-\sigma_h),\sigma_h-\sigma_H)_{L^2(\Omega)}\le \sqrt{C_0}\|\sigma-\sigma_h\|_{\A}\osc(f, \mathcal{T}_H\backslash\mathcal{T}_h).
\end{equation}
Thus, for any $\delta\ge 0$,
\begin{equation}\label{Quasi-Orthogonality2}
 (1-\delta)\|\sigma-\sigma_h\|^2_{\A}\le \|\sigma-\sigma_H\|^2_{\A}-\|\sigma_h-\sigma_H\|^2_{\A}+\frac{C_0}{\delta}\osc^2(f, \mathcal{T}_H\backslash\mathcal{T}_h).
\end{equation}
\end{thm}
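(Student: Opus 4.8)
The plan is to derive both inequalities from Lemma~\ref{lemma:DiscreteStablity} together with elementary manipulations in the $\A$-seminorm, treating the intermediate solution $\tilde\sigma_h$ of \eqref{discreteproblem-inter} as the bridge between $\sigma_h$ and $\sigma_H$. For \eqref{Quasi-Orthogonality1}, I would split $\sigma_h-\sigma_H=(\sigma_h-\tilde\sigma_h)+(\tilde\sigma_h-\sigma_H)$ and pair it with $\A(\sigma-\sigma_h)$. The second summand drops out thanks to the discrete stability identity \eqref{eq:DiscreteStablity}, so
\[
(\A(\sigma-\sigma_h),\sigma_h-\sigma_H)_{L^2(\Omega)}=(\A(\sigma-\sigma_h),\sigma_h-\tilde\sigma_h)_{L^2(\Omega)}.
\]
The Cauchy--Schwarz inequality for the (possibly degenerate) bilinear form $(\A\cdot,\cdot)_{L^2(\Omega)}$ then bounds the right-hand side by $\|\sigma-\sigma_h\|_{\A}\,\|\sigma_h-\tilde\sigma_h\|_{\A}$, and invoking \eqref{Quasi-Orthogonality0} from Lemma~\ref{lemma:DiscreteStablity} yields \eqref{Quasi-Orthogonality1}. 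I would note that the very same estimate also controls the absolute value of the left-hand side, a fact needed for the second part.

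For \eqref{Quasi-Orthogonality2}, I would expand the coarse-mesh error around the fine-mesh error via $\sigma-\sigma_H=(\sigma-\sigma_h)+(\sigma_h-\sigma_H)$, obtaining the binomial identity
\[
\|\sigma-\sigma_h\|^2_{\A}=\|\sigma-\sigma_H\|^2_{\A}-\|\sigma_h-\sigma_H\|^2_{\A}-2(\A(\sigma-\sigma_h),\sigma_h-\sigma_H)_{L^2(\Omega)}.
\]
I would then replace the cross term by its absolute-value bound from the first part and apply Young's inequality in the form $2\sqrt{C_0}\,\|\sigma-\sigma_h\|_{\A}\,\osc(f,\mathcal{T}_H\backslash\mathcal{T}_h)\le \delta\|\sigma-\sigma_h\|^2_{\A}+\tfrac{C_0}{\delta}\osc^2(f,\mathcal{T}_H\backslash\mathcal{T}_h)$; absorbing the term $\delta\|\sigma-\sigma_h\|^2_{\A}$ to the left-hand side gives exactly \eqref{Quasi-Orthogonality2}, with the case $\delta=0$ being vacuous.

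I do not anticipate a genuine obstacle: the substantive work is already packaged in Lemma~\ref{lemma:DiscreteStablity}, whose proof rests on Hypotheses~\ref{H1} and~\ref{H3}. The only point deserving a word of care is that $\A$ is merely semi-definite, so $\|\cdot\|_{\A}$ is a seminorm rather than a norm; however, both the Cauchy--Schwarz inequality and the binomial identity remain valid for the associated semi-inner product, so the argument goes through unchanged.
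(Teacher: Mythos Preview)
Your proposal is correct and follows essentially the same route as the paper: split $\sigma_h-\sigma_H$ through the intermediate solution $\tilde\sigma_h$, kill the $\tilde\sigma_h-\sigma_H$ contribution via \eqref{eq:DiscreteStablity}, apply Cauchy--Schwarz and \eqref{Quasi-Orthogonality0} to obtain \eqref{Quasi-Orthogonality1}, then expand $\|\sigma-\sigma_H\|_{\A}^2$ and use Young's inequality for \eqref{Quasi-Orthogonality2}. Your remark that the absolute value of the cross term is controlled (and that the Cauchy--Schwarz/binomial identities survive the semi-definiteness of $\A$) is exactly what is needed to make the lower bound in the second step go through.
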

\begin{proof} We follow the idea of  \cite[Theorem 3.2]{chen2009convergence}.
Let $(\tilde{\sigma}_h, \tilde{u}_h)$ be the solution of \eqref{discreteproblem-inter}. By Hypothesis \ref{H1} and  Lemma \ref{lemma:DiscreteStablity},
$$
(\A(\sigma-\sigma_h),\tilde{\sigma}_h-\sigma_H)_{L^2(\Omega)}=0.
$$
Thus,
\begin{equation}
 (\A(\sigma-\sigma_h),\sigma_h-\sigma_H)_{L^2(\Omega)}=(\A(\sigma-\sigma_h),\sigma_h-\tilde{\sigma}_h)_{L^2(\Omega)}
 \le \|\sigma-\sigma_h\|_{\A} \|\sigma_h-\tilde{\sigma}_h\|_{\A}.
\end{equation}
Therefore, the estimate \eqref{Quasi-Orthogonality1} follows from the inequality (\ref{Quasi-Orthogonality0}).
By the identity $\sigma-\sigma_H=\sigma-\sigma_h+\sigma_h-\sigma_H$,
$$\|\sigma-\sigma_H\|^2_{\A}=\|\sigma-\sigma_h\|^2_{\A}+\|\sigma_h-\sigma_H\|^2_{\A}+2(\A(\sigma-\sigma_h),\sigma_h-\sigma_H).$$
%When $\osc(f_h,\mathcal{T}_H)=0$, by (\ref{Quasi-Orthogonality1}), $(\A(\sigma-\sigma_h),\sigma_h-\sigma_H)=0$ and thus (\ref{Quasi-Orthogonality3}) follows.
In general, we use
\begin{equation}
 \begin{split}
 \|\sigma-\sigma_H\|^2_{\A}&=\|\sigma-\sigma_h\|^2_{\A}+\|\sigma_h-\sigma_H\|^2_{\A}+2(\A(\sigma-\sigma_h),\sigma_h-\sigma_H)\\
 &\geq \|\sigma-\sigma_h\|^2_{\A}+\|\sigma_h-\sigma_H\|^2_{\A}-2\sqrt{C_0}\|\sigma-\sigma_h\|_{\A}\osc(f,\mathcal{T}_H\backslash\mathcal{T}_h)\\
 &\geq \|\sigma_h-\sigma_H\|^2_{\A}+(1-\delta)\|\sigma-\sigma_h\|^2_{\A}-\frac{C_0}{\delta}\osc^2(f,\mathcal{T}_H\backslash\mathcal{T}_h)
 \end{split}
 \end{equation}
 to prove (\ref{Quasi-Orthogonality2}). In the last step, we have used the Young inequality.
\end{proof}

\section{Convergence and Optimality of AMFEM}
In this section, we  prove  convergence and optimality of the adaptive mixed finite element methods.
First we present the adaptive algorithms.
In what follows, we replace the dependence on the actual mesh $\mathcal{T}$ by the iteration counter $k$.

{\bf Algorithm.} Given an initial mesh $\mathcal{T}_0$ and a marking parameter $0<\theta<1$, set $k=0$ and iterate
\begin{itemize}
\item{Solve on $\mathcal{T}_k$, to get the solution $\sigma_k$.}
\item{Compute the error estimator $\eta=\eta(\sigma_k,\mathcal{T}_k)$.}
\item{Mark the minimal element set $\mathcal{M}_k$, such that
              $$\eta^2(\sigma_k,\mathcal{M}_k)+\osc^2(f,\mathcal{M}_k)\ge\theta(\eta^2(\sigma_k,\mathcal{T}_k)+\osc^2(f,\mathcal{T}_k)).$$}
\item{Refine each element $K\in \mathcal{M}_k$ by the newest vertex bisection, and make some necessary  completeness  to get a refined  conforming mesh $\mathcal{T}_{k+1}$; $k=k+1$.}
\end{itemize}

\subsection{Convergence}
 We follow  the arguments in \cite{cascon2008quasi, chen2009convergence, huang2012convergence} to prove convergence of the above adaptive algorithms.

\begin{lem} Given $f\in L^2(\Omega)$, let $(\sigma, u)$ be the solution of \eqref{eq:UnifiedProblem}, $(\sigma_k, u_k)$  and $(\sigma_{k-1}, u_{k-1})$ be
the solutions to the discrete problem \eqref{discreteproblem} over the nested triangulations $\mathcal{T}_k$ and  $\mathcal{T}_{k-1}$ respectively. Then given any positive constant $\epsilon$, there exist positive constants $0<\rho<1$ (depending on the dimension)
and $\beta_2(\epsilon)$ such that
\begin{equation}\label{thm:Twolevel}
\eta^2(\sigma_k,\mathcal{T}_k)\le (1+\epsilon)(\eta^2(\sigma_{k-1},\mathcal{T}_{k-1})-\rho\eta^2(\sigma_{k-1},\mathcal{M}_{k-1}))+\frac1{\beta_2(\epsilon)}\|\sigma_k-\sigma_{k-1}\|^2_{\A},
\end{equation}
and
\begin{equation}\label{lemma: Oscilation}
\osc^2(f,\mathcal{T}_k)\le\osc^2(f,\mathcal{T}_{k-1})-\rho\osc^2(f,\mathcal{T}_{k-1}\backslash\mathcal{T}_k).
\end{equation}
\end{lem}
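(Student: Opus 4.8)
The statement contains two independent assertions. Estimate \eqref{thm:Twolevel} is an estimator reduction in the spirit of \cite{cascon2008quasi,chen2009convergence,huang2012convergence}, and \eqref{lemma: Oscilation} is an elementary monotonicity property of the data oscillation under bisection; I would prove them in turn. For \eqref{thm:Twolevel}, the decisive structural fact is that $\eta$ depends on the stress only, linearly through $\A\sigma_h$, and that each local contribution $\eta(\cdot,K)$ is an $\ell^2$-combination of $L^2$-norms of linear expressions in its argument, hence a seminorm on $\Sigma_k$. Since $\sigma_{k-1}\in\Sigma_{k-1}\subset\Sigma_k$ by Hypothesis \ref{H1}, splitting $\sigma_k=\sigma_{k-1}+(\sigma_k-\sigma_{k-1})$, applying the triangle inequality and Young's inequality with parameter $\epsilon$ on each $K\in\mathcal{T}_k$, and summing, gives
\[
\eta^2(\sigma_k,\mathcal{T}_k)\le(1+\epsilon)\,\eta^2(\sigma_{k-1},\mathcal{T}_k)+(1+\epsilon^{-1})\,\eta^2(\sigma_k-\sigma_{k-1},\mathcal{T}_k).
\]
The field $\A(\sigma_k-\sigma_{k-1})$ is piecewise polynomial on $\mathcal{T}_k$, so the inverse estimate $\|h_K\rot(\A w)\|_{L^2(K)}\lesssim\|\A w\|_{L^2(K)}$ (its $\Curl$-analogue when $d=3$) and the discrete trace inequality $\|h_E^{1/2}[\A w\cdot t_E]\|_{L^2(E)}\lesssim\|\A w\|_{L^2(\Omega_E)}$ hold on $\mathcal{T}_k$; summing with the finite overlap of the patches, and using that $\A$ is bounded, symmetric and positive semidefinite, so that $\|\A w\|_{L^2(\Omega)}^2\lesssim(\A w,w)_{L^2(\Omega)}=\|w\|_{\A}^2$, one obtains $\eta^2(\sigma_k-\sigma_{k-1},\mathcal{T}_k)\lesssim\|\sigma_k-\sigma_{k-1}\|_{\A}^2$. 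This is the last term of \eqref{thm:Twolevel}, with $\beta_2(\epsilon)\approx\epsilon/(1+\epsilon)$; note that $\eta$ carries no dependence on $f$, which is why no oscillation term appears in \eqref{thm:Twolevel}.

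It then remains to bound $\eta^2(\sigma_{k-1},\mathcal{T}_k)$ by $\eta^2(\sigma_{k-1},\mathcal{T}_{k-1})-\rho\,\eta^2(\sigma_{k-1},\mathcal{M}_{k-1})$, and here two geometric facts do the work. First, $\sigma_{k-1}$ is a \emph{coarse} stress: on any edge $E\in\mathcal{E}_k$ contained in the interior of a coarse element $K\in\mathcal{T}_{k-1}$, the field $\A\sigma_{k-1}$ is a single polynomial, so the tangential jump $[\A\sigma_{k-1}\cdot t_E]$ vanishes; consequently only elements of $\mathcal{T}_{k-1}$ (and their children) and edges of $\mathcal{E}_{k-1}$ (and their sub-edges) contribute to $\eta^2(\sigma_{k-1},\mathcal{T}_k)$. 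Second, newest-vertex bisection shrinks the diameter of each child of a refined element, and of each sub-edge of a refined edge, by the fixed factor $2^{-1/d}<1$, while leaving the local mesh size of unrefined elements and edges unchanged; since each $K\in\mathcal{M}_{k-1}$ is bisected at least once, the portion of $\eta^2(\sigma_{k-1},\mathcal{T}_k)$ coming from $\mathcal{M}_{k-1}$ is strictly smaller than the corresponding portion of $\eta^2(\sigma_{k-1},\mathcal{T}_{k-1})$. Distributing the edge-jump terms onto adjacent elements of $\mathcal{T}_k$, exactly as in \cite{cascon2008quasi,chen2009convergence}, converts these two observations into the desired bound with $\rho:=1-2^{-1/d}\in(0,1)$, and \eqref{thm:Twolevel} follows by combining with the first paragraph. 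I expect this edge bookkeeping to be the only delicate point: a single bisection of $K$ need not split every edge of $K$, so the jump term across an unsplit edge $E\subset\partial K$ must be charged to the child of $K$ adjacent to $E$ — whose diameter is already reduced — rather than to $K$ itself, and shape regularity is what makes this reassignment lossless up to a constant.

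For \eqref{lemma: Oscilation} I would argue directly, using that $\mathcal{Q}_h$ is a local $L^2$-projection onto a discontinuous piecewise polynomial space of fixed degree. If $K\in\mathcal{T}_k\cap\mathcal{T}_{k-1}$, then $\mathcal{Q}_kf|_K=\mathcal{Q}_{k-1}f|_K$ and $h_K$ is unchanged, so $\|h_K(f-\mathcal{Q}_kf)\|_{L^2(K)}^2=\|h_K(f-\mathcal{Q}_{k-1}f)\|_{L^2(K)}^2$. If $K\in\mathcal{T}_{k-1}\setminus\mathcal{T}_k$ is bisected into children $K'\in\mathcal{T}_k$, then $h_{K'}\le 2^{-1/d}h_K$, and since the piecewise polynomials on $\{K'\}$ contain the polynomials of that degree on $K$,
\[
\sum_{K'\subset K}\|f-\mathcal{Q}_kf\|_{L^2(K')}^2=\|f-\mathcal{Q}_kf\|_{L^2(K)}^2\le\|f-\mathcal{Q}_{k-1}f\|_{L^2(K)}^2,
\]
so $\sum_{K'\subset K}\|h_{K'}(f-\mathcal{Q}_kf)\|_{L^2(K')}^2\le 2^{-2/d}\|h_K(f-\mathcal{Q}_{k-1}f)\|_{L^2(K)}^2$. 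Summing over $\mathcal{T}_k$ and using $1-2^{-2/d}\ge 1-2^{-1/d}=\rho$ yields $\osc^2(f,\mathcal{T}_k)\le\osc^2(f,\mathcal{T}_{k-1})-\rho\,\osc^2(f,\mathcal{T}_{k-1}\setminus\mathcal{T}_k)$, which is \eqref{lemma: Oscilation} with the same $\rho$ as above; if the constants produced by the two parts differ, one takes their minimum.
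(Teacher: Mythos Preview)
Your argument is correct and follows essentially the same two-step route as the paper: a triangle/Young inequality together with inverse and trace estimates to pass from $\eta(\sigma_k,\mathcal{T}_k)$ to $\eta(\sigma_{k-1},\mathcal{T}_k)$ plus a perturbation $\|\sigma_k-\sigma_{k-1}\|_{\A}$, then a pure geometric reduction of $\eta(\sigma_{k-1},\mathcal{T}_k)$ under bisection, with \eqref{lemma: Oscilation} handled directly from $h_{K'}\le 2^{-1/d}h_K$. The only discrepancy is the value of $\rho$: since the jump terms are weighted by $h_E=|E|^{1/(d-1)}$, a halved edge/face gives the factor $2^{-1/(d-1)}$ rather than $2^{-1/d}$, and the paper accordingly takes $\rho=1-2^{-1/(d-1)}$; this does not affect the validity of your proof, as the lemma only requires some $\rho\in(0,1)$.
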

\begin{proof}
By the definition of $\eta^2(\sigma_k,\mathcal{T}_{k})$, $\eta^2(\sigma_{k-1},\mathcal{T}_{k})$, the trace theorem and the inverse inequality imply
\begin{equation*}
|\eta(\sigma_k,\mathcal{T}_{k})-\eta(\sigma_{k-1},\mathcal{T}_{k})|\lesssim\|\sigma_k-\sigma_{k-1}\|_{\A}.
\end{equation*}
An application of the Young inequality yields
\begin{equation}\label{1}
\eta^2(\sigma_k,\mathcal{T}_{k})\le(1+\epsilon)\eta^2(\sigma_{k-1},\mathcal{T}_{k})+\frac1{\beta_2(\epsilon)}\|\sigma_k-\sigma_{k-1}\|^2_{\A}.
\end{equation}

Let $\mathcal{N}_{k}=\mathcal{T}_{k}\backslash\mathcal{T}_{k-1}$ be the set of the new elements in $\mathcal{T}_k$ but not in $\mathcal{T}_{k-1}$,   and  $\bar{\mathcal{M}}_{k-1}\subseteq\mathcal{T}_{k-1}$
be the set of the  elements which are refined.  Notice that $\mathcal{T}_{k-1}\backslash\bar{\mathcal{M}}_{k-1}=\mathcal{T}_{k}\backslash\mathcal{N}_{k}$.
Given element $K\in\mathcal{N}_k$, consider its edge/face $E\in K\cap\mathcal{E}_k$.
If $E$ is in the interior of some element $T\in\mathcal{M}_{k-1}$, then $[\A\sigma_{k-1}\times\nu_E]|_E=0$ since $\sigma_{k-1}$ is a polynomial in $K$;
otherwise, its measure is at most  half of that of some edge/face of $T\in\mathcal{M}_{k-1}$ and thus
$$\eta^2(\sigma_{k-1},\mathcal{N}_k)\le 2^{-\frac1{d-1}}\eta^2(\sigma_{k-1},\bar{\mathcal{M}}_{k-1}).$$
Since some more elements are refined for the conformity of the triangulation, $\mathcal{M}_{k-1}\subseteq\bar{\mathcal{M}}_{k-1}$. Therefore,
\begin{eqnarray*}
\eta^2(\sigma_{k-1},\mathcal{T}_k)
&=&\eta^2(\sigma_{k-1},\mathcal{N}_k)+\eta^2(\sigma_{k-1},\mathcal{T}_k\backslash\mathcal{N}_k)\\
&\le&2^{-\frac1{d-1}}\eta^2(\sigma_{k-1},\bar{\mathcal{M}}_{k-1})+\eta^2(\sigma_{k-1},\mathcal{T}_{k-1}\backslash\bar{\mathcal{M}}_{k-1})\\
&\le&\eta^2(\sigma_{k-1},\mathcal{T}_{k-1})-(1-2^{-\frac1{d-1}})\eta^2(\sigma_{k-1},\bar{\mathcal{M}}_{k-1})\\
&\le&\eta^2(\sigma_{k-1},\mathcal{T}_{k-1})-(1-2^{-\frac1{d-1}})\eta^2(\sigma_{k-1},\mathcal{M}_{k-1}).
\end{eqnarray*}
This  leads to
\begin{equation}\label{2}
\eta^2(\sigma_{k-1},\mathcal{T}_k)\le \eta^2(\sigma_{k-1},\mathcal{T}_{k-1})-(1-2^{-\frac1{d-1}})\eta^2(\sigma_{k-1},\mathcal{M}_{k-1}).
\end{equation}
With $\rho:=1-2^{-\frac1{d-1}}$, a combination of \eqref{1} and \eqref{2} proves  \eqref{thm:Twolevel}.
As for  \eqref{lemma: Oscilation}, it is an immediate  result of the definition of the mesh size $h_K$.
\end{proof}

In the next  theorem, we  establish  convergence of the adaptive methods.
\begin{thm}\label{thm:Convergence}
Given $f\in L^2(\Omega)$, let $(\sigma, u)$ be the solution of \eqref{eq:UnifiedProblem}, $(\sigma_k, u_k)$  and $(\sigma_{k-1}, u_{k-1})$ be
the solutions to the discrete problem \eqref{discreteproblem} over the nested triangulations $\mathcal{T}_k$ and  $\mathcal{T}_{k-1}$ respectively.
Then there exist positive constants $0<\alpha<1$,  $\beta>0$, $\gamma>0$ such that
$$\epsilon_k\le\alpha\epsilon_{k-1},$$
where
$$\epsilon_k=\|\sigma-\sigma_k\|_{\A}^2+\gamma\eta^2(\sigma_k,\mathcal{T}_k)+(\beta+\gamma)\osc^2(f,\mathcal{T}_{k}).$$
\end{thm}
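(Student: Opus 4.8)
The plan is to combine the three ingredients already available—quasi-orthogonality (Theorem~\ref{thm:Quasi-Orthogonality}), the estimator reduction between levels (the previous lemma, \eqref{thm:Twolevel}), and discrete reliability (Theorem~\ref{thm:DiscreteUpperBound})—into a contraction for the combined quantity $\epsilon_k$. This is the standard ``error plus weighted estimator plus weighted oscillation'' argument of \cite{cascon2008quasi,chen2009convergence,huang2012convergence}, and the only work is to choose the weights $\beta,\gamma$ and the contraction factor $\alpha$ correctly so that all cross terms close up. I would carry out the following steps in order.

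\textbf{Step 1: assemble the three estimates.} Write $\mathcal{T}_h=\mathcal{T}_k$, $\mathcal{T}_H=\mathcal{T}_{k-1}$. From \eqref{Quasi-Orthogonality2} with a parameter $\delta\in(0,1)$ to be fixed,
\begin{equation*}
(1-\delta)\|\sigma-\sigma_k\|^2_{\A}\le \|\sigma-\sigma_{k-1}\|^2_{\A}-\|\sigma_k-\sigma_{k-1}\|^2_{\A}+\frac{C_0}{\delta}\osc^2(f,\mathcal{T}_{k-1}\backslash\mathcal{T}_k).
\end{equation*}
From \eqref{thm:Twolevel} and \eqref{lemma: Oscilation},
\begin{equation*}
\gamma\eta^2(\sigma_k,\mathcal{T}_k)\le\gamma(1+\epsilon)\bigl(\eta^2(\sigma_{k-1},\mathcal{T}_{k-1})-\rho\eta^2(\sigma_{k-1},\mathcal{M}_{k-1})\bigr)+\frac{\gamma}{\beta_2(\epsilon)}\|\sigma_k-\sigma_{k-1}\|^2_{\A},
\end{equation*}
\begin{equation*}
(\beta+\gamma)\osc^2(f,\mathcal{T}_k)\le(\beta+\gamma)\bigl(\osc^2(f,\mathcal{T}_{k-1})-\rho\osc^2(f,\mathcal{T}_{k-1}\backslash\mathcal{T}_k)\bigr).
\end{equation*}
Adding all three, the critical observation is that the $\|\sigma_k-\sigma_{k-1}\|^2_{\A}$ term coming with coefficient $-1$ from quasi-orthogonality must absorb the $+\gamma/\beta_2(\epsilon)$ term from the estimator reduction; this forces $\gamma$ small (equivalently $\beta_2(\epsilon)$ large, i.e.\ $\epsilon$ small). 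Choosing $\gamma\le \beta_2(\epsilon)/2$, say, leaves $-\tfrac12\|\sigma_k-\sigma_{k-1}\|^2_{\A}\le 0$, which we simply drop.

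\textbf{Step 2: use D\"orfler marking and discrete reliability to control $\osc^2(f,\mathcal{T}_{k-1}\backslash\mathcal{T}_k)$ and create the contraction.} After Step~1 we have
\begin{equation*}
\epsilon_k\le\|\sigma-\sigma_{k-1}\|^2_{\A}+\delta\|\sigma-\sigma_k\|^2_{\A}+\gamma(1+\epsilon)\eta^2(\sigma_{k-1},\mathcal{T}_{k-1})+(\beta+\gamma)\osc^2(f,\mathcal{T}_{k-1})-\gamma(1+\epsilon)\rho\,\eta^2(\sigma_{k-1},\mathcal{M}_{k-1})-\bigl((\beta+\gamma)\rho-\tfrac{C_0}{\delta}\bigr)\osc^2(f,\mathcal{T}_{k-1}\backslash\mathcal{T}_k),
\end{equation*}
where I would further bound $\delta\|\sigma-\sigma_k\|^2_{\A}$ by moving it to the left (so really it contributes $(1-\delta)$ on the left, i.e.\ divide through, or keep $\delta$ tiny). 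Since $\mathcal{M}_{k-1}\subseteq\mathcal{T}_{k-1}\backslash\mathcal{T}_k$ and D\"orfler marking gives $\eta^2(\sigma_{k-1},\mathcal{M}_{k-1})+\osc^2(f,\mathcal{M}_{k-1})\ge\theta(\eta^2(\sigma_{k-1},\mathcal{T}_{k-1})+\osc^2(f,\mathcal{T}_{k-1}))$, the negative $\eta^2(\sigma_{k-1},\mathcal{M}_{k-1})$ term buys a genuine fraction of $\eta^2(\sigma_{k-1},\mathcal{T}_{k-1})$ and of $\osc^2(f,\mathcal{T}_{k-1})$; I split the $\gamma(1+\epsilon)\rho$ coefficient so part of it reduces $\gamma\eta^2(\sigma_{k-1},\mathcal{T}_{k-1})$ to $\gamma(1-\mu)\eta^2$ for some $\mu>0$ (strict decrease) and the remainder, together with the surplus of $(\beta+\gamma)\rho$ over $C_0/\delta$ on the oscillation term, reduces the $\osc^2(f,\mathcal{T}_{k-1})$ coefficient below $(\beta+\gamma)$. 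Finally use Hypothesis~\ref{lemma:PostreioriEstimation} (reliability, \eqref{eq: UpperBound}) to bound a small multiple of $\eta^2(\sigma_{k-1},\mathcal{T}_{k-1})+\osc^2(f,\mathcal{T}_{k-1})$ from above by $\|\sigma-\sigma_{k-1}\|^2_{\A}$; this converts the excess into a factor strictly less than $1$ on the $\|\sigma-\sigma_{k-1}\|^2_{\A}$ term, yielding $\epsilon_k\le\alpha\epsilon_{k-1}$ with $\alpha<1$.

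\textbf{Step 3: fix the constants in the right order.} The dependencies cascade: pick $\delta$ small (controls how much error leaks onto the left and how large $C_0/\delta$ is); then pick $\epsilon$ small (so $\beta_2(\epsilon)$ is large enough that $\gamma\le\beta_2(\epsilon)/2$ is compatible with the later requirements); then pick $\beta$ large enough that $(\beta+\gamma)\rho > C_0/\delta$ with room to spare; then pick $\gamma$ small enough to be dominated by $\beta_2(\epsilon)$ and so that the reliability step only needs an arbitrarily small multiple of $(\eta^2+\osc^2)$; finally read off $\alpha=\max\{\,\text{contraction factors on each of the three pieces}\,\}<1$. I expect the main obstacle to be exactly this bookkeeping: the weight $\gamma$ is simultaneously pushed to be small (to be absorbed by the $\|\sigma_k-\sigma_{k-1}\|^2_{\A}$ term) and needs $\gamma\rho$ large enough relative to the D\"orfler threshold and to the oscillation excess $C_0/\delta$, so one must verify the feasible region for $(\delta,\epsilon,\beta,\gamma)$ is nonempty. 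Keeping $\beta$ free and large (it only appears with $\osc^2$, which reduces strictly by \eqref{lemma: Oscilation}) is what makes the system solvable; this is the standard resolution and presents no real difficulty once the ordering is respected.
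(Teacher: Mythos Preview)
Your proposal follows essentially the same contraction argument as the paper: combine quasi-orthogonality, estimator reduction \eqref{thm:Twolevel}, oscillation reduction \eqref{lemma: Oscilation}, D\"orfler marking, and reliability \eqref{eq: UpperBound}, then tune the weights. The paper's bookkeeping is slightly different---it sets $\gamma=\beta_2(\epsilon)/(1-\delta)$ for exact cancellation of the $\|\sigma_k-\sigma_{k-1}\|_{\A}^2$ terms and then uses reliability in the forward direction $\|\sigma-\sigma_{k-1}\|_{\A}^2\le C_{Rel}(\eta^2+\osc^2)$ to push the positive excess $(\tfrac{1}{1-\delta}-\alpha)$ on the error over to the estimator side---but your variant closes as well. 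Two slips to correct before writing it up: (i) in Step~2 you say you ``bound a small multiple of $\eta^2+\osc^2$ from above by $\|\sigma-\sigma_{k-1}\|_{\A}^2$,'' but reliability goes the other way; what you actually need (and what \eqref{eq: UpperBound} gives) is the lower bound $\eta^2+\osc^2\ge C_{Rel}^{-1}\|\sigma-\sigma_{k-1}\|_{\A}^2$, so that a negative estimator surplus becomes a negative error contribution; (ii) $\beta_2(\epsilon)$ \emph{decreases} as $\epsilon\to0$ (it arises from Young's inequality, $1/\beta_2(\epsilon)\sim 1+1/\epsilon$), so in Step~3 you should fix $\epsilon$ first with $(1+\epsilon)(1-\rho\theta)<1$, read off $\beta_2(\epsilon)$ and the admissible range for $\gamma$, and only then pick $\delta$ small and $\beta$ large. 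Finally, note that discrete reliability (Theorem~\ref{thm:DiscreteUpperBound}) plays no role here; only Hypothesis~\ref{lemma:PostreioriEstimation} is used in the convergence step.
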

\begin{proof}
It follows from  Theorem \ref{thm:Quasi-Orthogonality}, \eqref{thm:Twolevel} and \eqref{lemma: Oscilation} that
\begin{eqnarray*}
\epsilon_k
&\le&\frac{1}{1-\delta}\|\sigma-\sigma_{k-1}\|^2_{\A}-\frac{1}{1-\delta}\|\sigma_k-\sigma_{k-1}\|^2_{\A}+\frac{C_0}{\delta(1-\delta)}\osc^2(f,\mathcal{T}_{k-1}\backslash\mathcal{T}_k) \\
&&+\gamma(1+\epsilon)(\eta^2(\sigma_{k-1},\mathcal{T}_{k-1})-\rho\eta^2(\sigma_{k-1},\mathcal{M}_{k-1}))+\frac{\gamma}{\beta_2(\epsilon)}\|\sigma_k-\sigma_{k-1}\|^2_{\A}\\
&&+(\beta+\gamma)(\osc^2(f,\mathcal{T}_{k-1})-\rho\osc^2(f,\mathcal{T}_{k-1}\backslash\mathcal{T}_k)).
\end{eqnarray*}
Now the choice of $\gamma=\frac{\beta_2(\epsilon)}{1-\delta}$ and $\beta=\frac{C_0}{\rho\delta(1-\delta)}$  leads to
\begin{eqnarray*}
\epsilon_k
&\le&\frac{1}{1-\delta}\|\sigma-\sigma_{k-1}\|^2_{\A}+\beta\osc^2(f,\mathcal{T}_{k-1})\\
&&+\gamma(1+\epsilon)\big(\eta^2(\sigma_{k-1},\mathcal{T}_{k-1})+\osc^2(f,\mathcal{T}_{k-1})\big)\\
&&-\gamma(1+\epsilon)\rho\big(\eta^2(\sigma_{k-1},\mathcal{M}_{k-1})+\osc^2(f,\mathcal{T}_{k-1}\backslash\mathcal{T}_k)\big).
\end{eqnarray*}
Since $\mathcal{M}_{k-1}\subset\mathcal{T}_{k-1}\backslash\mathcal{T}_k$, the marking strategy in adaptive Algorithm implies that
$$\eta^2(\sigma_{k-1},\mathcal{M}_{k-1})+\osc^2(f,\mathcal{T}_{k-1}\backslash\mathcal{T}_k)
\ge\theta(\eta^2(\sigma_k,\mathcal{T}_{k-1})+\osc^2(f,\mathcal{T}_{k-1})).$$
A substitution of this inequality into the  previous one yields
\begin{eqnarray*}
\epsilon_k
&\le&\frac{1}{1-\delta}\|\sigma-\sigma_{k-1}\|^2_{\A}+\beta\osc^2(f,\mathcal{T}_{k-1})\\
&& +\gamma(1+\epsilon)(1-\rho\theta)(\eta^2(\sigma_{k-1},\mathcal{T}_{k-1})+\osc^2(f,\mathcal{T}_{k-1})).
\end{eqnarray*}
By the definition of $\epsilon_{k-1}$, we have for any $0<\alpha<1$,
\begin{eqnarray*}
\epsilon_k-\alpha \epsilon_{k-1}
&\le&(\frac{1}{1-\delta}-\alpha)\|\sigma-\sigma_{k-1}\|^2_{\A}+\beta(1-\alpha)\osc^2(f,\mathcal{T}_{k-1})\\
&&+\gamma((1+\epsilon)(1-\rho\theta)-\alpha)(\eta^2(\sigma_{k-1},\mathcal{T}_{k-1})+\osc^2(f,\mathcal{T}_{k-1})).
\end{eqnarray*}
 Hypothesis \ref{lemma:PostreioriEstimation} states
  $$
  \|\sigma-\sigma_{k-1}\|^2_{\A}\le C_{Rel}(\eta^2(\sigma_{k-1},\mathcal{T}_{k-1})+\osc^2(f,\mathcal{T}_{k-1})).$$
This inequality plus the inequality give
\begin{eqnarray*}
\epsilon_k-\alpha \epsilon_{k-1}
&\le&\bigg((\frac{1}{1-\delta}-\alpha)C_{Rel}+\gamma(1+\epsilon)(1-\rho\theta)-\gamma\alpha\bigg)\eta^2(\sigma_{k-1},\mathcal{T}_{k-1})\\
&&+\bigg((\frac{1}{1-\delta}-\alpha)C_{Rel}+\gamma(1+\epsilon)(1-\rho\theta)-\gamma\alpha+\beta(1-\alpha)\bigg)\osc^2(f,\mathcal{T}_{k-1}).
\end{eqnarray*}
To ensure $\epsilon_k-\alpha \epsilon_{k-1}\le 0$,  the factor $\alpha$ can be chosen  such that
$$
(\frac{1}{1-\delta}-\alpha)C_{Rel}+\gamma(1+\epsilon)(1-\rho\theta)-\gamma\alpha+\beta(1-\alpha)\le 0,
$$
which implies  $\alpha=\frac{\beta+\gamma(1+\epsilon)(1-\rho\theta)+\frac{C_{Rel}}{1-\delta}}{\beta+\gamma+C_{Rel}}$ with $0<\delta<\frac{\gamma(\rho\theta-\epsilon(1-\rho\theta))}{\gamma(\rho\theta-\epsilon(1-\rho\theta))+C_{Rel}}$.
\end{proof}

\subsection{Optimality}
Let $\mathcal{T}_0$ be an initial quasi-uniform triangulation with $\# \mathcal{T}_0>2$, and let $\mathbb{T}_N$ be the set of all
possible triangulations $\mathcal{T}$ which is generated from $\mathcal{T}_0$ with at most $N$ elements more than $\mathcal{T}_0$. For $s>0$ we define
the approximation class $\mathbb{A}_s$ as
$$\mathbb{A}_s:=\{(\sigma,f):|\sigma,f|_s<\infty, \mbox{~with~} |\sigma,f|_s:=\sup_{N>0}(N^s\inf_{\mathcal{T}\in\mathbb{T}_N}\inf_{\tau\in\Sigma_\mathcal{T}}\|\sigma-\tau\|_{\A}^2+\osc^2(f,\mathcal{T}))\}.$$

\begin{lem}\label{lemma: EnergyError}
Given a parameter
\begin{equation}\label{eq:MarkingCondition}
\theta\in \left(0,\frac{\min\{C_{Eff},1\}}{C_{Drel}+\min\{C_{Eff},1\}+1}\right),
\end{equation}
let $(\sigma, u)$ be the solution of \eqref{eq:UnifiedProblem}, $(\sigma_h, u_h)$ and $(\sigma_H, u_H)$ be
the solutions to the discrete problem \eqref{discreteproblem} over $\mathcal{T}_h$ and $\mathcal{T}_H$, satisfying
%let $\mathcal{T}_h$ be a refinement of $\mathcal{T}_H$ and $\sigma,\sigma_h,\sigma_H$ be the relevant solutions satisfying
\begin{equation}\label{condition}
\|\sigma-\sigma_h\|_{\A}^2+\osc^2(f,\mathcal{T}_h)\le \alpha'(\|\sigma-\sigma_H\|_{\A}^2+\osc^2(f,\mathcal{T}_H))
\end{equation}
with $0<\alpha'<\frac{\min\{C_{Eff},1\}-(\min\{C_{Eff},1\}+C_{Drel}+1)\theta}{\min\{C_{Eff},1\}+C_0}\in (0,1)$, then it holds
$$\theta(\eta^2(\sigma_H,\mathcal{T}_H)+\osc^2(f,\mathcal{T}_H))\le\eta^2(\sigma_H,\tilde{\mathcal{R}})+\osc^2(f,\mathcal{T}_H\backslash\mathcal{T}_h).$$
\end{lem}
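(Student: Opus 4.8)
The plan is to argue by contraposition: assume the marking condition fails, i.e.
$$
\eta^2(\sigma_H,\tilde{\mathcal{R}})+\osc^2(f,\mathcal{T}_H\backslash\mathcal{T}_h)<\theta\bigl(\eta^2(\sigma_H,\mathcal{T}_H)+\osc^2(f,\mathcal{T}_H)\bigr),
$$
and derive a \emph{lower} bound on $\|\sigma-\sigma_H\|_\A^2+\osc^2(f,\mathcal{T}_H)$ in terms of $\|\sigma-\sigma_h\|_\A^2+\osc^2(f,\mathcal{T}_h)$ that contradicts \eqref{condition}. The three ingredients are all available in the excerpt: reliability on $\mathcal{T}_H$ (Hypothesis \ref{lemma:PostreioriEstimation}), efficiency on $\mathcal{T}_H$ (likewise), and the discrete reliability of Theorem \ref{thm:DiscreteUpperBound} together with the quasi-orthogonality estimate \eqref{Quasi-Orthogonality2}.

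First I would split $\eta^2(\sigma_H,\mathcal{T}_H)=\eta^2(\sigma_H,\tilde{\mathcal{R}})+\eta^2(\sigma_H,\mathcal{T}_H\setminus\tilde{\mathcal{R}})$ and estimate the unrefined part by the difference of energy errors: on $\mathcal{T}_H\setminus\tilde{\mathcal{R}}$ the triangulations agree, so a local efficiency bound gives $\eta^2(\sigma_H,\mathcal{T}_H\setminus\tilde{\mathcal{R}})\lesssim \|\sigma-\sigma_H\|_\A^2$; more usefully, combining global efficiency on $\mathcal{T}_H$ with the triangle inequality $\|\sigma-\sigma_H\|_\A\le\|\sigma-\sigma_h\|_\A+\|\sigma_h-\sigma_H\|_\A$ and then Theorem \ref{thm:DiscreteUpperBound} to control $\|\sigma_h-\sigma_H\|_\A^2$ by $\eta^2(\sigma_H,\tilde{\mathcal{R}})+\osc^2(f,\mathcal{T}_H\setminus\mathcal{T}_h)$. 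Concretely: from efficiency, $\min\{C_{Eff},1\}\bigl(\eta^2(\sigma_H,\mathcal{T}_H)+\osc^2(f,\mathcal{T}_H)\bigr)\le \|\sigma-\sigma_H\|_\A^2+\osc^2(f,\mathcal{T}_H)$ (absorbing $\osc^2$ trivially since $C_{Eff}\eta^2\le\|\sigma-\sigma_H\|^2_\A$), and then I bound $\|\sigma-\sigma_H\|_\A^2$ using $\|\sigma-\sigma_H\|_\A^2\le (1+t)\|\sigma-\sigma_h\|_\A^2+(1+1/t)\|\sigma_h-\sigma_H\|_\A^2$ — but the cleaner route, mirroring \eqref{Quasi-Orthogonality2} with a suitable $\delta$, is to write $\|\sigma-\sigma_H\|_\A^2\ge\|\sigma-\sigma_h\|_\A^2+\|\sigma_h-\sigma_H\|_\A^2-2\sqrt{C_0}\|\sigma-\sigma_h\|_\A\osc(f,\mathcal{T}_H\setminus\mathcal{T}_h)$ from Theorem \ref{thm:Quasi-Orthogonality}, which I would instead use in the \emph{reverse} inequality form to pass from $H$-quantities back to $h$-quantities.

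Assembling: the failed marking assumption lets me replace $\eta^2(\sigma_H,\tilde{\mathcal{R}})+\osc^2(f,\mathcal{T}_H\setminus\mathcal{T}_h)$ by $\theta\bigl(\eta^2(\sigma_H,\mathcal{T}_H)+\osc^2(f,\mathcal{T}_H)\bigr)$ wherever it appears, and then Theorem \ref{thm:DiscreteUpperBound} gives $\|\sigma_h-\sigma_H\|_\A^2\le C_{Drel}\theta\bigl(\eta^2(\sigma_H,\mathcal{T}_H)+\osc^2(f,\mathcal{T}_H)\bigr)$. Feeding this plus reliability on $\mathcal{T}_H$ into the estimate chain, using $\|\sigma-\sigma_h\|_\A^2+\osc^2(f,\mathcal{T}_h)\ge \|\sigma-\sigma_h\|_\A^2$ and a Young inequality with the $C_0$-term, and finally invoking efficiency to turn the leftover $\bigl(1-(C_{Drel}+\min\{C_{Eff},1\}+1)\theta/\min\{C_{Eff},1\}\bigr)$ factor into a positive multiple of $\|\sigma-\sigma_H\|_\A^2+\osc^2(f,\mathcal{T}_H)$, I get
$$
\|\sigma-\sigma_h\|_\A^2+\osc^2(f,\mathcal{T}_h)\ge \frac{\min\{C_{Eff},1\}-(\min\{C_{Eff},1\}+C_{Drel}+1)\theta}{\min\{C_{Eff},1\}+C_0}\bigl(\|\sigma-\sigma_H\|_\A^2+\osc^2(f,\mathcal{T}_H)\bigr),
$$
whose right-hand side exceeds $\alpha'\bigl(\|\sigma-\sigma_H\|_\A^2+\osc^2(f,\mathcal{T}_H)\bigr)$ by the stated bound on $\alpha'$, contradicting \eqref{condition}. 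The hypothesis \eqref{eq:MarkingCondition} on $\theta$ is exactly what keeps the numerator positive so that $\alpha'$ can be chosen in $(0,1)$.

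The main obstacle is bookkeeping the constants so that the final fraction matches $\alpha'$ exactly: I must be careful about how the $\osc^2$ terms are distributed (note $\osc^2(f,\mathcal{T}_H\setminus\mathcal{T}_h)\le\osc^2(f,\mathcal{T}_H)$ and, from the excerpt, $\osc^2(f_h,\mathcal{T}_H)\le\osc^2(f,\mathcal{T}_H\setminus\mathcal{T}_h)$), and about whether the $\min\{C_{Eff},1\}$ appears because efficiency is stated only for $\eta^2$ and not for $\eta^2+\osc^2$, forcing the ``$\min$ with $1$'' when I lump oscillation in. The rest — triangle inequalities, Young's inequality, and substituting Theorem \ref{thm:DiscreteUpperBound} — is routine.
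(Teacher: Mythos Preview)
Your proposal is correct and uses the same three tools as the paper --- discrete reliability (Theorem~\ref{thm:DiscreteUpperBound}), the quasi-orthogonality estimate \eqref{Quasi-Orthogonality1}, and efficiency from Hypothesis~\ref{lemma:PostreioriEstimation} --- but the paper organizes them differently. Rather than contraposition, the paper argues \emph{directly} by sandwiching $\|\sigma_h-\sigma_H\|_\A^2$: the upper bound is \eqref{eq:le} from Theorem~\ref{thm:DiscreteUpperBound}, and the lower bound comes from expanding $\|\sigma_h-\sigma_H\|_\A^2=\|\sigma-\sigma_H\|_\A^2-\|\sigma-\sigma_h\|_\A^2-2(\A(\sigma-\sigma_h),\sigma_h-\sigma_H)$, applying \eqref{Quasi-Orthogonality1} with Young's inequality (weight $\min\{C_{Eff},1\}$), then invoking \eqref{condition} and efficiency to replace $\|\sigma-\sigma_H\|_\A^2+\osc^2(f,\mathcal{T}_H)$ by $\min\{C_{Eff},1\}(\eta^2+\osc^2)$. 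Combining the two bounds on $\|\sigma_h-\sigma_H\|_\A^2$ and moving the leftover $\osc^2(f,\mathcal{T}_H\setminus\mathcal{T}_h)$ term to the other side gives the claim immediately.

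Two points worth noting. First, reliability on $\mathcal{T}_H$ is \emph{not} used in the paper's proof; only efficiency enters, and the $\min\{C_{Eff},1\}$ appears exactly for the reason you identified --- efficiency controls only $\eta^2$, while the $\osc^2$ part is handled by the trivial bound with constant $1$. Second, the detours you sketch first (splitting $\eta^2(\sigma_H,\mathcal{T}_H)$ over $\tilde{\mathcal{R}}$ and its complement, local efficiency on unrefined elements, the Young parameter $t$) are unnecessary; the paper never decomposes the estimator this way, and the direct sandwich avoids all of it. Your ``cleaner route'' paragraph is the one that actually matches the paper, and the contrapositive packaging is logically equivalent but slightly less transparent for constant-tracking.
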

\begin{proof}

On one hand, from Theorem \ref{thm:DiscreteUpperBound} it holds
\begin{equation}\label{eq:le}
\|\sigma_h-\sigma_H\|_{\A}^2\le C_{Drel}(\eta^2(\sigma_H,\tilde{\mathcal{R}})+\osc^2(f,\mathcal{T}_H\backslash\mathcal{T}_h)).
\end{equation}
On the other hand, from Theorem \ref{thm:Quasi-Orthogonality} and the Young inequality it holds
\begin{eqnarray*}
2(\sigma-\sigma_h,\sigma_h-\sigma_H)_{\mathcal{A}}
&\le&2\sqrt{C_0}\|\sigma-\sigma_h\|_{\A}\osc(f,\mathcal{T}_H\backslash\mathcal{T}_h)\\
&\le& \frac{C_0}{\min\{C_{Eff},1\}}\|\sigma-\sigma_h\|^2_{\A}+\min\{C_{Eff},1\}\osc^2(f,\mathcal{T}_H\backslash\mathcal{T}_h).
\end{eqnarray*}
This leads to
\begin{eqnarray*}
\|\sigma_h-\sigma_H\|_{\A}^2&=&\|\sigma-\sigma_H\|^2_{\A}-\|\sigma-\sigma_h\|^2_{\A}-2(\sigma-\sigma_h,\sigma_h-\sigma_H)_{\A}\\
&\ge&(\|\sigma-\sigma_H\|_{\A}^2+\osc^2(f,\mathcal{T}_H))\\
&&-(1+\frac{C_0}{\min\{C_{Eff},1\}})(\|\sigma-\sigma_h\|_{\A}^2+\osc^2(f,\mathcal{T}_h))\\
&&-\osc^2(f,\mathcal{T}_H)+\osc^2(f,\mathcal{T}_h)-\min\{C_{Eff},1\}\osc^2(f,\mathcal{T}_H\backslash\mathcal{T}_h).
\end{eqnarray*}
The condition \eqref{condition}, the lower bound in Hypothesis \ref{lemma:PostreioriEstimation},  and the relation
$$
|\osc^2(f,\mathcal{T}_H)-\osc^2(f,\mathcal{T}_h)|\le\osc^2(f,\mathcal{T}_H\backslash\mathcal{T}_h)
$$
imply
\begin{eqnarray}
\|\sigma_h-\sigma_H\|_{\A}^2
&\ge &\left(\min\{C_{Eff},1\}-(\min\{C_{Eff},1\}+C_0)\alpha'\right)\nonumber\\
&&\times(\eta^2(\sigma_H,\mathcal{T}_H)+\osc^2(f,\mathcal{T}_H))\label{eq:ge}\\
&&-(\min\{C_{Eff},1\}+1)\osc^2(f,\mathcal{T}_H\backslash\mathcal{T}_h).\nonumber
\end{eqnarray}
A combination of \eqref{eq:le} and \eqref{eq:ge} yields
\begin{eqnarray*}
&&(C_{Drel}+\min\{C_{Eff},1\}+1)(\eta^2(\sigma_H,\tilde{\mathcal{R}})+\osc^2(f,\mathcal{T}_H\backslash\mathcal{T}_h))\\
& &\ge (\min\{C_{Eff},1\}-(\min\{C_{Eff},1\}+C_0)\alpha')(\eta^2(\sigma_H,\mathcal{T}_H)+\osc^2(f,\mathcal{T}_H)),
\end{eqnarray*}
from which, and the definition of $\alpha'$ and the restriction on $\theta$, we  obtain the desired result.
\end{proof}

\begin{lem} \label{lemma:Stability}
Let $(\sigma, u)$ be the solution of \eqref{eq:UnifiedProblem}, $(\sigma_h, u_h)$ and $(\sigma_H, u_H)$ be
the solutions to the discrete problem \eqref{discreteproblem} over $\mathcal{T}_h$ and $\mathcal{T}_H$,
 there exists a constant
$C_1>0$ such that
\begin{equation}\label{eq:stability}
\|\sigma-\sigma_h\|_{\A}^2+\osc^2(f,\mathcal{T}_h)\le C_1(\|\sigma-\sigma_H\|_{\A}^2+\osc^2(f,\mathcal{T}_H)).
\end{equation}
\end{lem}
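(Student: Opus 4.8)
The plan is to obtain \eqref{eq:stability} directly from the quasi-orthogonality estimate \eqref{Quasi-Orthogonality2} of Theorem \ref{thm:Quasi-Orthogonality}, supplemented only by the elementary monotonicity of the data oscillation under refinement. Neither the discrete reliability of Theorem \ref{thm:DiscreteUpperBound} nor the efficiency bound is needed here; the whole argument is short.

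First I would apply \eqref{Quasi-Orthogonality2} with the concrete choice $\delta=\tfrac12$, which gives
\[
\tfrac12\,\|\sigma-\sigma_h\|_{\A}^2\le\|\sigma-\sigma_H\|_{\A}^2-\|\sigma_h-\sigma_H\|_{\A}^2+2C_0\,\osc^2(f,\mathcal{T}_H\backslash\mathcal{T}_h).
\]
Discarding the nonpositive term $-\|\sigma_h-\sigma_H\|_{\A}^2$ and using the trivial inequality $\osc^2(f,\mathcal{T}_H\backslash\mathcal{T}_h)\le\osc^2(f,\mathcal{T}_H)$ (the oscillation being a sum of nonnegative element contributions over a subset of $\mathcal{T}_H$) already yields
\[
\|\sigma-\sigma_h\|_{\A}^2\le 2\,\|\sigma-\sigma_H\|_{\A}^2+4C_0\,\osc^2(f,\mathcal{T}_H).
\]

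Second, I would control the remaining term $\osc^2(f,\mathcal{T}_h)$ on the left-hand side of \eqref{eq:stability} by the coarse-mesh oscillation via the monotonicity $\osc^2(f,\mathcal{T}_h)\le\osc^2(f,\mathcal{T}_H)$. For each $K\in\mathcal{T}_H$ one writes $K=\bigcup_{K'\in\mathcal{T}_h,\,K'\subseteq K}K'$; since $h_{K'}\le h_K$ and, by the best-approximation property of the $L^2$-projection $\mathcal{Q}_h$ together with $U_H\subseteq U_h$, one has $\|f-f_h\|_{L^2(K')}\le\|f-f_H\|_{L^2(K')}$, summing the local contributions over $K'\subseteq K$ and then over $K\in\mathcal{T}_H$ gives the claim; this is the same type of computation already carried out in Section 2. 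Adding the last display to $\osc^2(f,\mathcal{T}_h)\le\osc^2(f,\mathcal{T}_H)$ then produces \eqref{eq:stability} with, for instance, $C_1:=\max\{2,\,4C_0+1\}$.

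I do not anticipate a genuine obstacle: the only step demanding a line of justification is the oscillation monotonicity in the second paragraph, while everything else is an immediate consequence of Theorem \ref{thm:Quasi-Orthogonality} and a discard of a nonpositive term.
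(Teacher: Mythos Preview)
Your proposal is correct and follows essentially the same approach as the paper's proof: the paper also applies the quasi-orthogonality estimate \eqref{Quasi-Orthogonality2} (with a generic $0<\delta<1$ rather than your concrete $\delta=\tfrac12$), drops the nonpositive term $-\|\sigma_h-\sigma_H\|_{\A}^2$, and then bounds both $\osc^2(f,\mathcal{T}_H\backslash\mathcal{T}_h)$ and $\osc^2(f,\mathcal{T}_h)$ by $\osc^2(f,\mathcal{T}_H)$ (the latter via \eqref{lemma: Oscilation}). The only cosmetic difference is that you spell out the oscillation-monotonicity argument explicitly, whereas the paper invokes it through \eqref{lemma: Oscilation}.
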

\begin{proof}
From \eqref{Quasi-Orthogonality2} and \eqref{lemma: Oscilation}, for any $0<\delta<1$, it holds
\begin{eqnarray*}
 &&\|\sigma-\sigma_h\|^2_{\A}+\osc^2(f,\mathcal{T}_h)\\
 &&\le \frac1{1-\delta}\|\sigma-\sigma_H\|^2_{\A} +\frac{C_0}{\delta(1-\delta)}\osc^2(f, \mathcal{T}_H\backslash\mathcal{T}_h)+\osc^2(f, \mathcal{T}_h)\\
 &&\le \frac1{1-\delta}\|\sigma-\sigma_H\|^2_{\A} +\left(\frac{C_0}{\delta(1-\delta)}+1\right)\osc^2(f, \mathcal{T}_H),
\end{eqnarray*}
which implies the desired result.
\end{proof}

\begin{thm}\label{estimateMk}
Let $\mathcal{M}_k$ be a set of marked elements with minimal cardinality, $(\sigma, u)$ the solution of \eqref{eq:UnifiedProblem}, and $(\mathcal{T}_k,\Sigma_k,\sigma_k, u_k)$
the sequence of triangulations, finite element spaces and discrete solutions produced by the adaptive finite element methods with the marking parameter $\theta$ in Lemma \ref{lemma: EnergyError}. It holds that
$$\#\mathcal{M}_k\le (\alpha')^{-\frac1s}|u,f|_{s}^{\frac1s}C_1^\frac1sC_2(\|\sigma-\sigma_k\|_{\A}^2+\osc^2(f,\mathcal{T}_k))^{-\frac1s},$$
where $\alpha'$ is defined in Lemma \ref{lemma: EnergyError}, $C_1$  in Lemma \ref{lemma:Stability},  and $C_2$ only depends on the shape regularity of $\mathcal{T}_0$.
\end{thm}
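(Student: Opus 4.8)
The plan is to combine the optimality-of-marking statement in Lemma \ref{lemma: EnergyError} with a standard "total-error-governs-cardinality" argument in the spirit of \cite{stevenson2007optimality, cascon2008quasi}. First I would exploit the approximation class hypothesis: since $(\sigma,f)\in\mathbb{A}_s$ (equivalently $|u,f|_s<\infty$), for the tolerance $\varepsilon_k := \alpha'(\|\sigma-\sigma_k\|_{\A}^2+\osc^2(f,\mathcal{T}_k))$ there is an integer $N\thickapprox (\varepsilon_k/|u,f|_s)^{-1/s}$ and a triangulation $\mathcal{T}_\varepsilon\in\mathbb{T}_N$ (obtained from $\mathcal{T}_0$) with $\inf_{\tau\in\Sigma_{\mathcal{T}_\varepsilon}}\|\sigma-\tau\|_{\A}^2+\osc^2(f,\mathcal{T}_\varepsilon)\le\varepsilon_k$. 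The first real step is to pass from this best-approximation bound to a bound on the actual discrete solution on $\mathcal{T}_\varepsilon$: using the reliability--efficiency framework (Hypothesis \ref{lemma:PostreioriEstimation}) together with Lemma \ref{lemma:Stability}, the discrete solution $(\sigma_\varepsilon,u_\varepsilon)$ on $\mathcal{T}_\varepsilon$ satisfies $\|\sigma-\sigma_\varepsilon\|_{\A}^2+\osc^2(f,\mathcal{T}_\varepsilon)\lesssim \varepsilon_k$, possibly after absorbing a constant into the choice of $N$.

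Next I would form the overlay $\mathcal{T}_* := \mathcal{T}_k\oplus\mathcal{T}_\varepsilon$, the smallest common conforming refinement of $\mathcal{T}_k$ and $\mathcal{T}_\varepsilon$. Two facts about the overlay are needed: it is a refinement of $\mathcal{T}_k$, and $\#\mathcal{T}_*-\#\mathcal{T}_k\le \#\mathcal{T}_\varepsilon-\#\mathcal{T}_0\lesssim N$. Because $\mathcal{T}_*$ is also a refinement of $\mathcal{T}_\varepsilon$, monotonicity of the best approximation and of the oscillation under refinement gives $\|\sigma-\sigma_*\|_{\A}^2+\osc^2(f,\mathcal{T}_*)\lesssim\varepsilon_k = \alpha'(\|\sigma-\sigma_k\|_{\A}^2+\osc^2(f,\mathcal{T}_k))$; choosing the constant in $N$ so that this holds with the precise factor $\alpha'$, the pair $(\mathcal{T}_k,\mathcal{T}_*)$ plays the role of $(\mathcal{T}_H,\mathcal{T}_h)$ in Lemma \ref{lemma: EnergyError}. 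That lemma then yields
\begin{equation*}
\theta\bigl(\eta^2(\sigma_k,\mathcal{T}_k)+\osc^2(f,\mathcal{T}_k)\bigr)\le\eta^2(\sigma_k,\mathcal{R}_{k\to *})+\osc^2(f,\mathcal{T}_k\backslash\mathcal{T}_*),
\end{equation*}
where $\mathcal{R}_{k\to *}\subseteq\mathcal{T}_k$ (more precisely the $\tilde{\mathcal{R}}$-patch of the refined set) is the collection of elements of $\mathcal{T}_k$ refined in the overlay, so that $\#\mathcal{R}_{k\to *}\lesssim \#\mathcal{T}_*-\#\mathcal{T}_k\lesssim N$. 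This says $\mathcal{R}_{k\to *}$ is a set satisfying the Dörfler-type marking criterion used in the Algorithm; since $\mathcal{M}_k$ is chosen with \emph{minimal} cardinality among all sets meeting that criterion, $\#\mathcal{M}_k\le\#\mathcal{R}_{k\to *}\lesssim N\thickapprox (\varepsilon_k/|u,f|_s)^{-1/s}$. Unwinding $\varepsilon_k=\alpha'(\|\sigma-\sigma_k\|_{\A}^2+\osc^2(f,\mathcal{T}_k))$ and invoking Lemma \ref{lemma:Stability} to replace $|u,f|_s$-scaling constants by $C_1$ gives exactly the asserted inequality with $C_2$ depending only on the shape regularity of $\mathcal{T}_0$ through the overlay cardinality estimate.

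The main obstacle — and the step needing the most care — is the transition from the \emph{best-approximation} quantity in the definition of $\mathbb{A}_s$ to the \emph{discrete-solution} error on the overlay, while keeping the multiplicative constant under control so that Lemma \ref{lemma: EnergyError} applies with its sharp threshold on $\alpha'$. This requires that the reliability/efficiency constants and $C_1$ combine in a way compatible with the admissible range of $\alpha'$ and $\theta$ fixed in Lemma \ref{lemma: EnergyError}; concretely one must check that a suitable choice of the hidden constant in $N\thickapprox(\varepsilon_k/|u,f|_s)^{-1/s}$ drives $\|\sigma-\sigma_*\|_{\A}^2+\osc^2(f,\mathcal{T}_*)$ below $\alpha'(\|\sigma-\sigma_k\|_{\A}^2+\osc^2(f,\mathcal{T}_k))$ exactly. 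The remaining ingredients — monotonicity of oscillation and of best approximation under refinement, the overlay cardinality bound $\#(\mathcal{T}_1\oplus\mathcal{T}_2)\le\#\mathcal{T}_1+\#\mathcal{T}_2-\#\mathcal{T}_0$, and the minimality of $\mathcal{M}_k$ — are standard and enter only as bookkeeping.
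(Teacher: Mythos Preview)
Your proposal is correct and follows essentially the same route as the paper: invoke the approximation class to produce a near-optimal mesh $\mathcal{T}_\varepsilon$, pass to the overlay $\mathcal{T}_*$ with $\mathcal{T}_k$, use Lemma~\ref{lemma:Stability} to control the total error on $\mathcal{T}_*$, feed this into Lemma~\ref{lemma: EnergyError}, and finish with minimality of $\mathcal{M}_k$ and the overlay cardinality bound. The only noteworthy difference is bookkeeping: the paper sets the tolerance as $\varepsilon=\alpha' C_1^{-1}(\|\sigma-\sigma_k\|_{\A}^2+\osc^2(f,\mathcal{T}_k))$ from the start, so that the single application of Lemma~\ref{lemma:Stability} (from $\mathcal{T}_\varepsilon$ to $\mathcal{T}_*$) yields exactly the factor $\alpha'$ needed for Lemma~\ref{lemma: EnergyError}; this dissolves the ``main obstacle'' you flag without any tuning of hidden constants in $N$, and it also makes the separate step of passing from best approximation on $\mathcal{T}_\varepsilon$ to the discrete solution on $\mathcal{T}_\varepsilon$ unnecessary.
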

\begin{proof}
We set $\varepsilon=\alpha'C_1^{-1}(\|\sigma-\sigma_k\|_{\A}^2+\osc^2(f,\mathcal{T}_k))$ where $\alpha'$ is from Lemma \ref{lemma: EnergyError}, and $C_1$ from Lemma \ref{lemma:Stability}. Since $(\sigma,f)\in \mathbb{A}_s$, there exist a refinement of $\mathcal{T}_0$, say,  $\mathcal{T}_\varepsilon$,  and $\sigma_\varepsilon\in \Sigma_{\mathcal{T}_\varepsilon}$ such that
$$\#\mathcal{T}_\varepsilon-\#\mathcal{T}_0\le|\sigma,f|_s^{\frac1s}\varepsilon^{-\frac1s},$$
$$\|\sigma-\sigma_\varepsilon\|_{\A}^2+\osc^2(f,\mathcal{T}_\varepsilon)\le\varepsilon.$$
Let $\mathcal{T}_*$ be the overlay of $\mathcal{T}_\varepsilon$ and $\mathcal{T}_k$, and  $(\sigma_\ast, u_\ast)$ be the corresponding discrete solution on $\mathcal{T}_*$.
Since $\mathcal{T}_*$ is a refinement of $\mathcal{T}_\varepsilon$, it follows from Lemma \ref{lemma:Stability} that
$$\|\sigma-\sigma_*\|_{\A}^2+\osc^2(f,\mathcal{T}_*)\le C_1(\|\sigma-\sigma_\varepsilon\|_{\A}^2+\osc^2(f,\mathcal{T}_\varepsilon))\le C_1\varepsilon=\alpha'(\|\sigma-\sigma_k\|_{\A}^2+\osc^2(f,\mathcal{T}_k)).$$
From Lemma \ref{lemma: EnergyError} it holds
$$\theta(\eta^2(\sigma_k,\mathcal{T}_k)+\osc^2(f,\mathcal{T}_k))\le\eta^2(\sigma_k,\widetilde{\mathcal{T}_k\backslash\mathcal{T}_*})+\osc^2(f,\mathcal{T}_k\backslash\mathcal{T}_*),$$
here $\widetilde{\mathcal{T}_k\backslash\mathcal{T}_*}$ is similarly defined as  $\tilde{\mathcal{R}}$. Note that the marking step in the adaptive Algorithm with $\theta$ chooses a subset of $\mathcal{M}_k\subset\mathcal{T}_k$ with minimal cardinality so that  the same property holds.
Therefore, there exists a constant $C_2$ depending on the shape regularity of $\mathcal{T}_0$ such that
$$\#\mathcal{M}_k\le \#\widetilde{\mathcal{T}_k\backslash\mathcal{T}_*}\le C_2(\#\mathcal{T}_*-\#\mathcal{T}_k)\le C_2(\#\mathcal{T}_\varepsilon-\#\mathcal{T}_0).$$
By the definition of $\varepsilon$, a combination of the above inequalities  shows
$$\#\mathcal{M}_k\le (\alpha')^{-\frac1s}|\sigma,f|_{s}^{\frac1s}C_1^\frac1sC_2(\|\sigma-\sigma_k\|_{\A}^2+\osc^2(f,\mathcal{T}_k))^{-\frac1s}.$$
\end{proof}

\begin{thm}
Let $\mathcal{M}_k$ be a set of marked elements with minimal cardinality, $(\sigma,u)$ the solution of \eqref{eq:UnifiedProblem}, and $(\mathcal{T}_k,\Sigma_k,\sigma_k,u_k)$
the sequence of triangulations, finite element spaces and discrete solutions produced by the adaptive finite element methods with the marking parameter $\theta$ in Lemma \ref{lemma: EnergyError}. Then it holds that
$$\|\sigma-\sigma_N\|_{\A}^2+\osc^2(f,\mathcal{T}_N)\lesssim |\sigma, f|_{s}(\#\mathcal{T}_N-\#\mathcal{T}_0)^{-s},\quad \mbox{for~~} (\sigma,f)\in\mathbb{A}_s.$$

\end{thm}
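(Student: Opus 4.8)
The plan is to combine three facts that are already in hand: the contraction estimate of Theorem~\ref{thm:Convergence}, the marked-set cardinality bound of Theorem~\ref{estimateMk}, and the standard complexity (closure) estimate for newest-vertex bisection. Throughout, abbreviate the total error quantity by $\zeta_k:=\|\sigma-\sigma_k\|_{\A}^2+\osc^2(f,\mathcal{T}_k)$ and let $\epsilon_k$ denote the combined quantity appearing in Theorem~\ref{thm:Convergence}. The first step is to record that $\zeta_k\thickapprox\epsilon_k$ with constants independent of $k$: on the one hand $\|\sigma-\sigma_k\|_{\A}^2\le\epsilon_k$ and $\osc^2(f,\mathcal{T}_k)\le(\beta+\gamma)^{-1}\epsilon_k$, so $\zeta_k\lesssim\epsilon_k$; on the other hand the efficiency bound \eqref{eq: LowerBound} of Hypothesis~\ref{lemma:PostreioriEstimation} gives $\gamma\,\eta^2(\sigma_k,\mathcal{T}_k)\lesssim\|\sigma-\sigma_k\|_{\A}^2$, whence $\epsilon_k\lesssim\zeta_k$.

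Next, iterating $\epsilon_k\le\alpha\epsilon_{k-1}$ from level $k$ up to level $N$ yields $\epsilon_N\le\alpha^{N-k}\epsilon_k$, i.e. $\epsilon_k\ge\alpha^{-(N-k)}\epsilon_N$; combined with $\zeta_k\thickapprox\epsilon_k$ this gives
\begin{equation*}
\zeta_k^{-1/s}\lesssim\alpha^{(N-k)/s}\,\zeta_N^{-1/s},\qquad 0\le k\le N.
\end{equation*}
At the same time, Theorem~\ref{estimateMk} applies under exactly the present hypotheses (since $(\sigma,f)\in\mathbb{A}_s$ and $\theta$ lies in the range fixed in Lemma~\ref{lemma: EnergyError}, and $\mathcal{M}_k$ has minimal cardinality), so $\#\mathcal{M}_k\lesssim|\sigma,f|_s^{1/s}\,\zeta_k^{-1/s}$ with constant $(\alpha')^{-1/s}C_1^{1/s}C_2$. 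Finally one invokes the complexity estimate for the newest-vertex-bisection refinement with completion (Binev--Dahmen--DeVore in two dimensions, Stevenson in general dimension), namely $\#\mathcal{T}_N-\#\mathcal{T}_0\lesssim\sum_{k=0}^{N-1}\#\mathcal{M}_k$ with a constant depending only on $\mathcal{T}_0$.

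Putting these together,
\begin{equation*}
\#\mathcal{T}_N-\#\mathcal{T}_0\lesssim\sum_{k=0}^{N-1}\#\mathcal{M}_k\lesssim|\sigma,f|_s^{1/s}\sum_{k=0}^{N-1}\zeta_k^{-1/s}\lesssim|\sigma,f|_s^{1/s}\,\zeta_N^{-1/s}\sum_{k=0}^{N-1}\alpha^{(N-k)/s},
\end{equation*}
and the last sum is bounded by the convergent geometric series $\sum_{j\ge1}\alpha^{j/s}=\alpha^{1/s}/(1-\alpha^{1/s})$, a constant independent of $N$ because $0<\alpha<1$. Rearranging $\#\mathcal{T}_N-\#\mathcal{T}_0\lesssim|\sigma,f|_s^{1/s}\,\zeta_N^{-1/s}$ yields $\zeta_N=\|\sigma-\sigma_N\|_{\A}^2+\osc^2(f,\mathcal{T}_N)\lesssim|\sigma,f|_s(\#\mathcal{T}_N-\#\mathcal{T}_0)^{-s}$, which is the assertion. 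The only genuinely non-elementary external ingredient is the bisection closure estimate, so the main point requiring care is verifying that this estimate covers the mandatory completion step in the Algorithm; the remaining items — the equivalence $\zeta_k\thickapprox\epsilon_k$, the geometric summation, and the degenerate case $\mathcal{T}_N=\mathcal{T}_0$ — are routine.
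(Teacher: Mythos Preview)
Your argument is correct and follows essentially the same route as the paper's proof: the closure estimate $\#\mathcal{T}_N-\#\mathcal{T}_0\lesssim\sum_{k=0}^{N-1}\#\mathcal{M}_k$, the cardinality bound from Theorem~\ref{estimateMk}, the contraction $\epsilon_N\le\alpha^{N-k}\epsilon_k$, the equivalence $\zeta_k\thickapprox\epsilon_k$, and a geometric-series summation. Your justification of $\zeta_k\thickapprox\epsilon_k$ via efficiency \eqref{eq: LowerBound} is exactly what the paper invokes tacitly, and your sum $\sum_{k=0}^{N-1}\alpha^{(N-k)/s}$ is a reindexing of the paper's $\sum_{j=0}^{N-1}\alpha^{j/s}$.
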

\begin{proof}
Let $\mu=(\alpha')^{-\frac1s}|u,f|_s^{\frac1s}C_1^\frac1sC_2$. We use the result  $\#\mathcal{T}_k-\#\mathcal{T}_0\lesssim\sum_{j=0}^{k-1}\#\mathcal{M}_j$
from \cite{stevenson2007optimality}, and the upper bound of $\# \mathcal{M}_j$ in Theorem \ref{estimateMk},  to obtain that
$$\#\mathcal{T}_N-\#\mathcal{T}_0\lesssim\sum_{j=0}^{N-1}\#\mathcal{M}_j\le\sum_{j=0}^{N-1}\mu(\|\sigma-\sigma_j\|_{\A}^2+\osc^2(f,\mathcal{T}_j))^{-\frac1s}.$$
From the convergence result in Theorem \ref{thm:Convergence} we have
$\epsilon_N\le\alpha^{N-j}\epsilon_j$ for any $0\le j\le N-1$, which, along with the fact $\epsilon_j\approx \|\sigma-\sigma_j\|_{\A}^2+\osc^2(f,\mathcal{T}_j)$, implies
$$\#\mathcal{T}_N-\#\mathcal{T}_0\lesssim\mu(\|\sigma-\sigma_N\|_{\A}^2+\osc^2(f,\mathcal{T}_N))^{-\frac1s}\sum_{j=0}^{N-1}\alpha^\frac{j}{s}.$$
Since $\alpha<1$, the term $\sum_{j=0}^{N-1}\alpha^\frac{j}{s}$ is bounded. The definition of $\mu$ leads to
$$\|\sigma-\sigma_N\|_{\A}^2+\osc^2(f,\mathcal{T}_N)\lesssim |u,f|_{s}(\#\mathcal{T}_N-\#\mathcal{T}_0)^{-s},\quad \mbox{for~} (u,f)\in\mathbb{A}_s.$$
\end{proof}

\section{Applications}
In this section, we present two examples which satisfy these five hypotheses. The first example is  the mixed finite element of the Poisson equation; the
 second one is the mixed finite element of the Stokes problem within  the pseudostress-velocity formulation.  For both the 2D and 3D, we prove that
 the Raviart--Thomas and  Brezzi--Douglas--Marini elements  satisfy  Hypotheses \ref{H1}--\ref{lemma:PostreioriEstimation} in  Section 2.
 Hence the corresponding adaptive algorithms converge at the optimal rate in the nonlinear approximation sense.

In  the sequel, we use superscript $P$ to denote the  subspace or operator for the Poisson problem, and  $S$ to denote the subspace or operator for the Stokes problem.

\subsection{The Poisson problem}
The Raviart--Thomas element spaces \cite{brezzi1991mixed} are defined for  $k\ge 0$ by
\begin{equation*}
RT_h^P=\Sigma_{h,k}^P\times U_{h,k}^P,
\end{equation*}
where
\begin{equation*}
\Sigma_{h,k}^P:=\{\tau\in H(\ddiv,\Omega;\mathbb{R}^d):\tau |_K\in P_k(K)^d+x P_k(K), \forall K\in \mathcal{T}_h\},
\end{equation*}
and
\begin{equation*}
U_{h,k}^P:=\{v\in L^2(\Omega;\mathbb{R}): v|_K\in P_k(K), \forall K\in \mathcal{T}_h\}.
\end{equation*}
Here $P_k(K)$ denotes the space of polynomials of degree at most $k$ over $K$.

The Brezzi--Douglas--Marini element spaces \cite{brezzi1991mixed} are defined for  $k\ge 0$ by
\begin{equation*}
BDM_h^P=\Sigma_{h,k}^P\times U_{h,k}^P,
\end{equation*}
where
\begin{equation*}
\Sigma_{h,k}^P:=\{\tau\in H(\ddiv,\Omega;\mathbb{R}^d):\tau |_K\in P_{k+1}(K)^d, \forall K\in \mathcal{T}_h\},
\end{equation*}
and
\begin{equation*}
U_{h,k}^P:=\{v\in L^2(\Omega;\mathbb{R}): v|_K\in P_k(K), \forall K\in \mathcal{T}_h\}.
\end{equation*}

\begin{thm}\label{thm: Verification1}
For the Raviart--Thomas and  Brezzi--Douglas--Marini elements of the Poisson problem, Hypotheses \ref{H1}--\ref{H3} and \ref{lemma:PostreioriEstimation} hold.
\end{thm}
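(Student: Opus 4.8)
The plan is to verify each of the four hypotheses in turn, exploiting the classical structure of the Raviart--Thomas and Brezzi--Douglas--Marini pairs. For Hypothesis~\ref{H1}, the inclusion $\Sigma_H\subset\Sigma_h$ follows from the fact that $\mathcal{T}_h$ refines $\mathcal{T}_H$ via newest-vertex bisection, so every $\tau_H\in\Sigma_{H,k}^P$ is a piecewise polynomial of the required local degree on $\mathcal{T}_h$ with continuous normal components across faces of $\mathcal{T}_h$; and $\ddiv\Sigma_{h,k}^P\subset U_{h,k}^P$ is immediate, since $\ddiv(P_k(K)^d+xP_k(K))\subset P_k(K)$ for RT and $\ddiv P_{k+1}(K)^d\subset P_k(K)$ for BDM. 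For Hypothesis~\ref{H2}, the relevant statement is the discrete inf-sup condition with $U_h$ carrying the mesh-dependent discrete $H^1$ norm \eqref{eq:DiscreteNorm}; this is exactly the setting of \cite[Lemma~2.1]{arnold1982interior}, whose construction of a Fortin-type operator $\Pi_h:H_0^1(\Omega)\to\Sigma_h$ (using the standard RT/BDM degrees of freedom together with the commuting property $\ddiv\Pi_h\tau=\mathcal{Q}_h\ddiv\tau$) gives $\|\Pi_h\tau\|_{L^2(\Omega)}\lesssim\|\tau\|_{1,h}$ broken-norm estimates; one then tests and uses the surjectivity of $\ddiv$ onto $U_h$, with the equivalent form \eqref{einfsup} following from the abstract theory in Brezzi--Fortin.

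For Hypothesis~\ref{H3}, the operator $\mathcal{S}_H:U_h\to U_H$ is built element by element over the coarse mesh: on $K\in\mathcal{T}_H\cap\mathcal{T}_h$ set $\mathcal{S}_Hv_h|_K=v_h|_K$ (which is already a polynomial of the right degree), and on a refined $K\in\mathcal{T}_H\setminus\mathcal{T}_h$ let $\mathcal{S}_Hv_h|_K$ be a suitable local polynomial approximation of degree $k$ of the piecewise polynomial $v_h|_K$ — for instance the $L^2(K)$ projection of $v_h$ onto $P_k(K)$, or a weighted average of the pieces. The estimate $\|v_h-\mathcal{S}_Hv_h\|_{L^2(K)}\lesssim h_K\|v_h\|_{1,h,D_K}$ is then a Bramble--Hilbert / scaling argument: on the reference patch the quantity $\|v_h-\mathcal{S}_Hv_h\|_{L^2(\hat K)}$ is controlled by the broken $H^1$ seminorm of $v_h$ plus the jump terms $h_E^{-1/2}\|[v_h]\|_{L^2(E)}$ across the interior faces of the refinement of $\hat K$ (since $v_h$ is continuous exactly when these jumps vanish, and in that case equals its own degree-$k$ projection up to the approximation error), and scaling back to $K$ produces the factor $h_K$.

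For Hypothesis~\ref{lemma:PostreioriEstimation}, I would cite the a posteriori analysis already available in the literature: reliability and efficiency of the curl/rot-type estimator $\eta$ for the mixed RT/BDM discretization of the Poisson problem are established, e.g., in \cite{carstensen1997posteriori,braess1996posteriori} in 2D and \cite{huang2012convergence} in 3D, the proof resting on a Helmholtz decomposition of $\sigma-\sigma_h$ together with an efficiency bound via bubble functions and inverse estimates. I expect the main obstacle to be Hypothesis~\ref{H3}: making the local approximation operator respect both the exact-on-unrefined-elements constraint and the $h_K$-scaled bound forces one to track precisely how the jump terms of the fine-mesh norm enter the patch estimate, which is the genuinely new ingredient here; by contrast Hypotheses~\ref{H1}, \ref{H2} and \ref{lemma:PostreioriEstimation} are essentially repackagings of standard results. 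Note also that Hypothesis~\ref{H4}, involving the $\Curl$-potential and the interpolation decomposition, is deliberately \emph{not} claimed in this theorem for the Poisson case and will be handled separately.
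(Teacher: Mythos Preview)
Your proposal is correct and aligns with the paper's approach, which is purely citation-based: the paper dispatches Hypothesis~\ref{H1} as immediate from the definitions, Hypothesis~\ref{H2} by citing \cite[Lemma~2.1]{lovadina2006energy}, Hypothesis~\ref{H3} by citing \cite[Lemma~2.8]{huang2012convergence}, and Hypothesis~\ref{lemma:PostreioriEstimation} by citing \cite[Theorems~2.1, 2.2]{huang2012convergence}. Your sketches for Hypotheses~\ref{H2} and~\ref{H3} essentially reconstruct the content of those cited results (your reference to \cite{arnold1982interior} for the mesh-dependent inf-sup is the original source the paper also acknowledges for the norm~\eqref{eq:DiscreteNorm}), and your anticipated difficulty with Hypothesis~\ref{H3} is well-placed---the construction you outline via the $L^2(K)$-projection plus a discrete Poincar\'e/Bramble--Hilbert argument on the refined patch is exactly the mechanism behind the cited lemma, though your parenthetical justification (``in that case equals its own degree-$k$ projection'') is slightly loose, since a continuous piecewise-$P_k$ function need not lie in $P_k(K)$; the correct point is that the broken $H^1$ seminorm with jump terms controls the deviation from \emph{any} constant, hence from the best $P_k$ approximant.
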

\begin{proof} Hypothesis \ref{H1} is an immediate result of the definitions  of finite element subspaces, while Hypotheses \ref{H2}, \ref{H3},  and \ref{lemma:PostreioriEstimation} were proved  in \cite[Lemma 2.1]{lovadina2006energy}, \cite[Lemma 2.8]{huang2012convergence}, \cite[Theorem 2.1, 2.2]{huang2012convergence},  respectively.
\end{proof}

In order to  check Hypothesis \ref{H4}, we  need  the following two spaces:
$$S_{h,k}:=\{\psi\in H^1(\Omega;\mathbb{R}): \psi|_K\in P_k(K), \forall K\in\mathcal{T}_h\},\quad d=2,$$
$$ND_{h,k}:=\{\psi\in H(\Curl,\Omega;\mathbb{R}^3): \psi|_K\in ND_k(K), \forall K\in\mathcal{T}_h\},\quad d=3,$$
where $ND_k(K):=P_{k-1}(K)^3\oplus\{v\in\tilde{P}_k(K)^3, v\cdot x=0,\forall x\in K\}$, with $\tilde{P}_k(K)$ being the spaces of homogeneous polynomials of  degree $k$  over $K$.

\begin{lem}\label{lemma:Zhong2011}
\cite[Theorem 4.1]{zhong2012convergence} \cite[Lemma 2.10]{huang2012convergence} There exists a quasi-interpolation operator $\mathcal{P}_{H,k}: ND_{h,k}\rightarrow ND_{H,k}$ such that for any $\varphi\in ND_{h,k}$,
\begin{equation}\label{eq: eq2}
\mathcal{P}_{H,k}\varphi|_{\mathcal{T}_H \setminus\tilde{\mathcal{R}}}=\varphi|_{\mathcal{T}_H \setminus\tilde{\mathcal{R}}},
\end{equation}
\begin{equation}\label{eq: eq3}
\|\Curl \mathcal{P}_{H,k}\varphi\|_{L^2(\Omega)}\lesssim  \|\Curl\varphi\|_{L^2(\Omega)}.
\end{equation}
\end{lem}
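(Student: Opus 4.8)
The plan is to build $\mathcal{P}_{H,k}$ as a Scott--Zhang--type (equivalently, Cl\'ement--type) quasi-interpolation operator onto the coarse N\'ed\'elec space $ND_{H,k}$, designed so that it reduces to the identity on every coarse element on which $\mathcal{T}_h$ and $\mathcal{T}_H$ agree. Since $\varphi\in ND_{h,k}$ is globally only in $H(\Curl,\Omega)$, its tangential trace along a coarse edge need not be well defined, so the canonical N\'ed\'elec degrees of freedom cannot be used directly; instead, to each coarse geometric entity (a cell $K\in\mathcal{T}_H$, a face, or an edge) one assigns a fixed dual weight function supported in a \emph{single} coarse element and defines the associated coefficient of $\mathcal{P}_{H,k}\varphi$ by integrating $\varphi$ against that weight. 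Descending through the geometric dimensions (cells, then faces, then edges) and subtracting at each stage the contribution already fixed --- exactly as in Scott--Zhang --- produces a well-defined $\mathcal{P}_{H,k}\varphi\in ND_{H,k}$, tangential continuity across coarse faces being automatic because edge and face coefficients are shared by neighbouring cells.

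For any coarse entity contained in $\mathcal{T}_H\setminus\tilde{\mathcal{R}}$ one chooses the supporting coarse element to be one on which $\mathcal{T}_h$ coincides with $\mathcal{T}_H$. On such an element $\varphi$ is already a N\'ed\'elec polynomial of degree $k$, so the weighted moments reproduce it exactly and all the Scott--Zhang correction terms vanish there; this gives $\mathcal{P}_{H,k}\varphi=\varphi$ on every $K\in\mathcal{T}_H\setminus\tilde{\mathcal{R}}$, i.e.\ \eqref{eq: eq2}. (In the two-dimensional situation the analogous statement for the Lagrange space $S_{h,k}$ is strictly easier: one applies the ordinary Scott--Zhang operator and uses that $\Curl$ of a scalar is a rotated gradient.)

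For the curl-stability \eqref{eq: eq3} the crucial structural fact is the commuting de~Rham diagram. The degrees of freedom of $ND_{H,k}$ and of the coarse Raviart--Thomas space can be coupled so that $\Curl\,\mathcal{P}_{H,k}=\mathcal{J}_{H}\,\Curl$ on $ND_{h,k}$, where $\mathcal{J}_{H}$ is the corresponding quasi-interpolation onto the coarse Raviart--Thomas space, again built from single-element weights so as to reproduce piecewise polynomials and to act as the identity on $\mathcal{T}_H\setminus\tilde{\mathcal{R}}$. A standard scaling/affine-equivalence argument gives the elementwise bound $\|\mathcal{J}_{H}w\|_{L^2(K)}\lesssim\|w\|_{L^2(\Omega_K)}$, and summing over $K\in\mathcal{T}_H$ with shape-regularity and finite overlap yields the uniform $L^2$-stability of $\mathcal{J}_{H}$. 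Hence $\|\Curl\,\mathcal{P}_{H,k}\varphi\|_{L^2(\Omega)}=\|\mathcal{J}_{H}\,\Curl\varphi\|_{L^2(\Omega)}\lesssim\|\Curl\varphi\|_{L^2(\Omega)}$.

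I expect the main obstacle to be imposing the two requirements --- the commuting identity $\Curl\,\mathcal{P}_{H,k}=\mathcal{J}_{H}\,\Curl$ and exact reproduction on $\mathcal{T}_H\setminus\tilde{\mathcal{R}}$ --- at the same time, since both constrain the choice of the dual weights, and near the interface $\partial(\bigcup\tilde{\mathcal{R}})$ one must check that the single-element supports are assigned consistently (a coarse entity lying on the interface must be attached to an \emph{unrefined} coarse element) while the weighted moments still span the full local polynomial spaces and respect tangential continuity. An alternative that sidesteps the explicit diagram is the Helmholtz route: put $w:=\Curl\varphi$, note $\ddiv w=0$ and that $w$ lies in the fine Raviart--Thomas space, set $\tilde w:=\mathcal{J}_{H}w$ (divergence-free, $L^2$-stable, identity on the unrefined region), and invoke exactness of the coarse sequence to obtain $\mathcal{P}_{H,k}\varphi\in ND_{H,k}$ with $\Curl\,\mathcal{P}_{H,k}\varphi=\tilde w$, correcting by a coarse gradient to restore \eqref{eq: eq2}; there the delicate point becomes controlling that gradient correction near the interface, which again comes down to a careful local construction.
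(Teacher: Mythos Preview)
The paper does not supply a proof for this lemma: it is quoted from \cite[Theorem~4.1]{zhong2012convergence} and \cite[Lemma~2.10]{huang2012convergence} and used as a black box in the verification of Hypothesis~\ref{H4} (Theorem~\ref{thm:Interpolation2}). There is therefore no in-paper argument to compare your proposal against.

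As to your sketch on its own merits: the construction of $\mathcal{P}_{H,k}$ as a Scott--Zhang/Cl\'ement averaging operator, with each coarse degree of freedom attached to a single coarse element chosen in the unrefined region whenever possible, is indeed how \eqref{eq: eq2} is obtained in the cited works. For \eqref{eq: eq3}, both routes you outline---an exact commuting relation $\Curl\,\mathcal{P}_{H,k}=\mathcal{J}_H\,\Curl$ with an $L^2$-stable Raviart--Thomas quasi-interpolant $\mathcal{J}_H$, or a Helmholtz lifting from such a $\mathcal{J}_H$---are legitimate and essentially equivalent. The cited constructions follow the first route: one arranges that $\mathcal{P}_{H,k}$ maps discrete gradients to discrete gradients, so that $\Curl\,\mathcal{P}_{H,k}$ factors through $\Curl$ and induces a locally $L^2$-stable operator on $\Curl(ND_{h,k})$. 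You correctly identify the only genuinely delicate point, namely that the commuting requirement and the identity-on-$\mathcal{T}_H\setminus\tilde{\mathcal R}$ requirement simultaneously constrain the assignment of supporting elements to coarse entities, and that near $\partial(\bigcup\tilde{\mathcal R})$ one must verify a consistent choice exists. This is resolvable (and is carried out in the references), but it is real work rather than a formality, so your caveat is well placed. Your parenthetical about the $2$D Lagrange case is correct but superfluous here, since the lemma as stated concerns only the $3$D N\'ed\'elec setting; the paper handles $d=2$ separately via the ordinary Scott--Zhang operator in the proof of Theorem~\ref{thm:Interpolation2}.
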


\begin{lem}\label{lemma:Schoberl2008}
\cite[Theorem 1]{schoberl2008posteriori} There exists an operator $\mathcal{I}_H: H(\Curl,\Omega;\mathbb{R}^3)\rightarrow ND_{H,1}$ with the following properties:
For every $\varphi\in H(\Curl,\Omega;\mathbb{R}^3),$ there exist $\psi\in H^1(\Omega;\mathbb{R}^3)$ and $w\in H^1(\Omega;\mathbb{R})$ such that
$$\varphi-\mathcal{I}_H\varphi=\psi+\grad w,$$
and
\begin{equation*}
\begin{split}
&h_K^{-1}\|w\|_{L^2(K)}+\|\grad w\|_{L^2(K)}\lesssim \|\varphi\|_{L^2(\Omega_K)},\\
& h_K^{-1}\|\psi\|_{L^2(K)}+\|\grad \psi\|_{L^2(K)}\lesssim \|\Curl\varphi\|_{L^2(\Omega_K)}.
 \end{split}
 \end{equation*}
\end{lem}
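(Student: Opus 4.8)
The statement is Theorem~1 of \cite{schoberl2008posteriori} and is used here as a black box; I sketch the idea for completeness. The plan is to build $\mathcal{I}_H$ not as the bare canonical N\'ed\'elec interpolant (which is not defined on all of $H(\Curl,\Omega;\mathbb{R}^3)$) but as the composition of a local, $h$-scaled regularization with the lowest-order edge interpolant onto $ND_{H,1}$, and then to read off the splitting $\varphi-\mathcal{I}_H\varphi=\psi+\grad w$ from a stable regular (Helmholtz-type) decomposition in which $\psi$ carries the curl of the remainder while $\grad w$ carries its curl-free part.

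Concretely, I would fix the partition of unity by the nodal hat functions $\{\lambda_V\}$ of $\mathcal{T}_H$ and, on each star-shaped vertex patch $\omega_V$, pick an averaging operator $R_V\colon H(\Curl,\omega_V)\to H^1(\omega_V;\mathbb{R}^3)$ that is $L^2$-stable, reproduces constants, commutes with $\grad$ on gradient fields, and satisfies $\|\grad R_V\chi\|_{L^2(\omega_V)}\lesssim h_V^{-1}\|\chi\|_{L^2(\omega_V)}+\|\Curl\chi\|_{L^2(\omega_V)}$. Gluing these through the partition of unity produces a global $R$ with $R\varphi\in H^1$, whence $\mathcal{I}_H\varphi:=\Pi_H^{\mathrm{ND}}R\varphi$ makes sense. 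Then
$$\varphi-\mathcal{I}_H\varphi=(\varphi-R\varphi)+(R\varphi-\Pi_H^{\mathrm{ND}}R\varphi),$$
where the second term is a N\'ed\'elec interpolation error of the smooth field $R\varphi$, and the first term is split once more by a patchwise regular decomposition of $H(\Curl)$, available since $\Omega$ is simply connected so that $\Curl$ has a bounded right inverse into $H^1$ (using $\ddiv\Curl=0$). Regrouping the $H^1$-parts of the two splittings into $\psi$ and the gradient parts into $\grad w$ yields the claimed form, and the bounds follow by Bramble--Hilbert and scaling on each $\Omega_K$: $\psi$ is an $H^1$ field whose curl is $\Curl(\varphi-\mathcal{I}_H\varphi)$, giving $h_K^{-1}\|\psi\|_{L^2(K)}+\|\grad\psi\|_{L^2(K)}\lesssim\|\Curl\varphi\|_{L^2(\Omega_K)}$, while $w$ is a first-order nodal-type error of an $L^2$ field, giving $h_K^{-1}\|w\|_{L^2(K)}+\|\grad w\|_{L^2(K)}\lesssim\|\varphi\|_{L^2(\Omega_K)}$.

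The hard part is not any individual estimate but the simultaneous demands on $\mathcal{I}_H$: it must be a genuine interpolation into $ND_{H,1}$, it must be local and stable, and it must be compatible with $\grad$ so that exactly the curl-carrying ($H^1$) content is absorbed into $\mathcal{I}_H\varphi$ and $\psi$. This last point is what prevents one from simply taking a global Helmholtz splitting and interpolating its pieces: doing so would leave the bound for $w$ polluted by $\|\Curl\varphi\|_{L^2(\Omega_K)}$, and removing that pollution is precisely the content of the regularized, locally commuting construction of \cite{schoberl2008posteriori}, where shape-regularity of $\mathcal{T}_H$ and the geometry of the vertex patches enter quantitatively.
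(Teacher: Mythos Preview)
Your identification is correct: the paper does not prove this lemma at all but simply quotes it from \cite{schoberl2008posteriori} and uses it as a black box, so there is no ``paper's own proof'' to compare against. Your sketch of Sch\"oberl's construction (local smoothing via a partition of unity, followed by the canonical N\'ed\'elec interpolant, with the remainder split into an $H^1$ part carrying the curl and a gradient part) is a faithful outline of the cited result and goes beyond what the present paper provides; for the purposes of this paper nothing more than the citation is needed.
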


Now we are ready to present the following theorem.
\begin{thm}\label{thm:Interpolation2}
For the Raviart--Thomas and  Brezzi--Douglas--Marini elements of the Poisson problem,  Hypothesis \ref{H4}  holds.
\end{thm}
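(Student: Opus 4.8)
The plan is to verify Hypothesis~\ref{H4} separately for $d=2$ and $d=3$, in each case taking $\varphi_h$ from the discrete de~Rham complex attached to $\Sigma_h$ and building $\Pi_H$ out of the quasi-interpolations of Lemmas~\ref{lemma:Zhong2011} and~\ref{lemma:Schoberl2008}. The orthogonality $(\A\sigma_H,\phi)_{L^2(\Omega)}=0$ will come essentially for free by arranging $\phi$ to be a \emph{discrete} divergence-free stress on the coarse mesh, so that $\phi\in Z_H(0)$ and \eqref{otho1} applies on $\mathcal T_H$.

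\emph{The case $d=2$.} Since $\ddiv\xi_h=0$ on the simply connected $\Omega$, exactness of the discrete complex $S_{h,\ell}\xrightarrow{\Curl}\Sigma_h\xrightarrow{\ddiv}U_h$, with $\ell=k+1$ for $RT_k^P$ and $\ell=k+2$ for $BDM_k^P$, yields $\varphi_h\in S_{h,\ell}=H_h(\Curl,\Omega)$ with $\xi_h=\Curl\varphi_h$. Let $\Pi_H$ be the Scott--Zhang quasi-interpolation onto $S_{H,\ell}$ (a two-dimensional analogue of Lemma~\ref{lemma:Zhong2011}): it reproduces $S_{H,\ell}$, is $H^1$-stable, and $\Pi_H v|_K$ depends only on $v|_{\Omega_K}$. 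Hence $\Curl\Pi_H\varphi_h\in\Sigma_H$, and $\Pi_H\varphi_h|_K=\varphi_h|_K$ whenever no refined element meets $K$, i.e. whenever $K\in\mathcal T_H\setminus\tilde{\mathcal R}$. Thus $\psi:=\varphi_h-\Pi_H\varphi_h\in S_{h,\ell}\subset H^1(\Omega)$ and $\phi:=0$ satisfy \eqref{eq: CommutativeProperty}--\eqref{eq: decompositon} trivially, $\psi$ vanishes on $\mathcal T_H\setminus\tilde{\mathcal R}$, and the two bounds in \eqref{eq: Interpolation2} follow from the standard elementwise estimates $\|\psi\|_{L^2(K)}\lesssim h_K\|\grad\varphi_h\|_{L^2(\Omega_K)}$ and $\|\grad\psi\|_{L^2(K)}\lesssim\|\grad\varphi_h\|_{L^2(\Omega_K)}$, a trace inequality on each $E\subset\partial K$, finite overlap of patches, and $|\grad\varphi_h|=|\Curl\varphi_h|$.

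\emph{The case $d=3$.} Exactness of the discrete de~Rham complex gives $\varphi_h\in H_h(\Curl,\Omega)$ --- the first-kind N\'ed\'elec space $ND_{h,\ell}$ for $RT_k^P$, the second-kind one for $BDM_k^P$ --- with $\xi_h=\Curl\varphi_h$. Set $\Pi_H:=\mathcal P_{H,\ell}$ from Lemma~\ref{lemma:Zhong2011} (or its second-kind analogue), so $\Pi_H\varphi_h\in H_H(\Curl,\Omega)$ and hence $\Curl\Pi_H\varphi_h\in\Sigma_H$, which is \eqref{eq: CommutativeProperty}. Apply the operator $\mathcal I_H$ of Lemma~\ref{lemma:Schoberl2008} to $g_h:=\varphi_h-\Pi_H\varphi_h\in H(\Curl,\Omega;\mathbb R^3)$ to get $\psi\in H^1(\Omega;\mathbb R^3)$ and $w\in H^1(\Omega)$ with $g_h-\mathcal I_Hg_h=\psi+\grad w$, and define $\phi:=\Curl(\mathcal I_Hg_h)$. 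Since $\Curl\grad w=0$, this gives $\Curl(\varphi_h-\Pi_H\varphi_h)=\Curl\psi+\phi$, which is \eqref{eq: decompositon}. The key point is that $\mathcal I_Hg_h\in ND_{H,1}$, so $\phi=\Curl(\mathcal I_Hg_h)$ is a lowest-order Raviart--Thomas field with $\ddiv\phi=0$; as $\Sigma_{H,0}^P\subset\Sigma_H$ whether $\Sigma_H$ is of Raviart--Thomas or Brezzi--Douglas--Marini type of any order $k\ge0$, we obtain $\phi\in Z_H(0)$, whence $(\A\sigma_H,\phi)_{L^2(\Omega)}=0$ by the coarse-mesh orthogonality \eqref{otho1}.

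\emph{Verification of \eqref{eq: Interpolation2} and the main obstacle.} By \eqref{eq: eq2}, $g_h=\varphi_h-\Pi_H\varphi_h$ vanishes on $\mathcal T_H\setminus\tilde{\mathcal R}$, so $\Curl g_h$ is supported in $\tilde{\mathcal R}$; since Lemma~\ref{lemma:Schoberl2008} gives $h_K^{-1}\|\psi\|_{L^2(K)}+\|\grad\psi\|_{L^2(K)}\lesssim\|\Curl g_h\|_{L^2(\Omega_K)}$, the field $\psi$ vanishes on every $K$ whose patch $\Omega_K$ does not meet $\tilde{\mathcal R}$, i.e. outside a one-layer enlargement of $\tilde{\mathcal R}$; this bounded enlargement preserves $\#\tilde{\mathcal R}\lesssim\#\mathcal R$ and leaves every estimate of Sections~3 and~4 intact, so it may simply be used in place of $\tilde{\mathcal R}$. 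The two scaling bounds then follow by summing the local estimates of Lemma~\ref{lemma:Schoberl2008}, using finite overlap of the patches $\{\Omega_K\}$, the trace inequality $\|\psi\|_{L^2(E)}^2\lesssim h_E^{-1}\|\psi\|_{L^2(K)}^2+h_E\|\grad\psi\|_{L^2(K)}^2$ for $E\subset\partial K$, and $\|\Curl g_h\|_{L^2(\Omega)}\le\|\Curl\varphi_h\|_{L^2(\Omega)}+\|\Curl\Pi_H\varphi_h\|_{L^2(\Omega)}\lesssim\|\Curl\varphi_h\|_{L^2(\Omega)}$ coming from \eqref{eq: eq3}. The delicate part, and the reason for applying $\mathcal I_H$ to $g_h$ rather than to $\varphi_h$, is exactly to keep $\psi$ localized near the refined region while still having $\Curl\Pi_H\varphi_h\in\Sigma_H$ and $\phi$ in the coarse discrete divergence-free space $Z_H(0)$; beyond that and the patch bookkeeping, the remaining work is to spell out the first- versus second-kind N\'ed\'elec complexes underlying the $RT$ and $BDM$ families in $3$D and to confirm that the quasi-interpolants of Lemmas~\ref{lemma:Zhong2011}--\ref{lemma:Schoberl2008} apply in each case.
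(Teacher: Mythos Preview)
Your proposal is correct and, in two dimensions, essentially identical to the paper's proof (Scott--Zhang onto $S_{H,\ell}$, $\psi=\varphi_h-\Pi_H\varphi_h$, $\phi=0$).

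In three dimensions you take a slightly different but equivalent route. The paper sets
\[
\Pi_H^P:=\mathcal I_H+\mathcal P_{H,k}-\mathcal I_H\mathcal P_{H,k}=I-(I-\mathcal I_H)(I-\mathcal P_{H,k}),
\]
so that $\varphi_h-\Pi_H^P\varphi_h=(I-\mathcal I_H)g_h=\psi+\grad w$ with the \emph{same} $g_h=(I-\mathcal P_{H,k})\varphi_h$ you use; taking curls then yields $\phi=0$. You instead keep $\Pi_H=\mathcal P_{H,k}$ and move the term $\Curl(\mathcal I_Hg_h)$ into $\phi$, observing that $\mathcal I_Hg_h\in ND_{H,1}$ forces $\phi\in Z_H(0)$ and hence $(\A\sigma_H,\phi)=0$ by \eqref{otho1} on $\mathcal T_H$. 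The $\psi$ in both arguments is literally the same function, so this is a regrouping rather than a new idea. The paper's packaging has the advantage that $\phi=0$ carries over unchanged to the Stokes case in Theorem~\ref{thm:Interpolation4}; your packaging makes clearer why the slot $\phi$ exists in Hypothesis~\ref{H4} at all.

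You are also more careful than the paper on one point: the Sch\"oberl estimate only gives $\psi|_K=0$ when $\Curl g_h$ vanishes on the whole patch $\Omega_K$, which in general forces $\psi$ to be supported in a one-layer enlargement of $\tilde{\mathcal R}$ rather than in $\tilde{\mathcal R}$ itself. Your remark that this harmless enlargement can replace $\tilde{\mathcal R}$ throughout Sections~3--4 is exactly right; the paper glosses over this.
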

\begin{proof}  For the 2D case, $H(\Curl,\Omega)=H^1(\Omega;\mathbb{R})$.  Let $H_h^P(\Curl,\Omega)=S_{h,k}$,  and $\Pi_H^P$  be the Scott-Zhang interpolation operator \cite{scott1990finite}. Then \eqref{eq: CommutativeProperty} is true. Let $\psi=\varphi_h-\Pi_H^P\varphi_h$, and $\phi=0$,   which implies that the  decomposition   \eqref{eq: decompositon}, the estimates \eqref{eq: Interpolation2} hold. %In fact $\tilde{\mathcal{R}}$ can be $\mathcal{T}_H\backslash\mathcal{T}_h$ in this case.

 For the  3D case,  let $H_h^P(\Curl,\Omega)=ND_{h,k}$,
and  $\Pi_H^P=\mathcal{I}_H+\mathcal{P}_{H,k}-\mathcal{I}_H\mathcal{P}_{H,k}$ where the operate $\mathcal{I}_H$ is from  Lemma \ref{lemma:Schoberl2008}, and the operator $\mathcal{P}_{H,k}$ is from Lemma \ref{lemma:Zhong2011}.
Then \eqref{eq: CommutativeProperty} is true. In addition, from Lemma \ref{lemma:Schoberl2008}, there exist
 $\psi\in H^1(\Omega;\mathbb{R}^3)$ and $w\in H^1(\Omega;\mathbb{R})$ such that
\begin{equation}\label{eq: PoissonInterpolation}
\varphi_h-\Pi_H^P\varphi_h=(I-\mathcal{I}_H)(I-\mathcal{P}_{H,k})\varphi_h=\psi+\grad w,
\end{equation}
and
\begin{equation}\label{eq: PoissonInterpolationProperty}
h_K^{-2}\|\psi\|^2_{L^2(K)}+\|\grad \psi\|^2_{L^2(K)}\lesssim \|\Curl(I-\mathcal{P}_{H,k})\varphi_h\|^2_{L^2(\Omega_K)}.
\end{equation}
From \eqref{eq: PoissonInterpolation}, \eqref{eq: PoissonInterpolationProperty}, the trace theorem, and Lemma \ref{lemma:Zhong2011},
we  obtain \eqref{eq: decompositon} and \eqref{eq: Interpolation2} with $\phi=0$.
\end{proof}

\subsection{The Stokes problem}
The finite element spaces for the Stokes problem are defined rowwise based on  those for  the Poisson problem case, with an additional restriction that the mean of the trace of the stress vanishes, namely,
$$
\Sigma_{h,k}^S:=\left\{\tau\in (\Sigma_{h,k}^P)^d |\int_\Omega\tr \tau dx=0\right\}, \text{ and } U_{h,k}^S:= (U_{h,k}^P)^d.
$$

\begin{thm}\label{thm: Verification3}
For the Raviart--Thomas and  Brezzi--Douglas--Marini elements of the Stokes problem, Hypotheses \ref{H1}--\ref{H3} and \ref{lemma:PostreioriEstimation} hold.
\end{thm}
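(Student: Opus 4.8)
The plan is to reduce every assertion to its already-established Poisson counterpart in Theorem~\ref{thm: Verification1}, exploiting the fact that the Stokes spaces $\Sigma_{h,k}^S$ and $U_{h,k}^S$ are built rowwise from the Poisson spaces, and to absorb the single linear side condition $\int_\Omega\tr\tau\,dx=0$ by a scalar correction with a constant tensor.

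First I would handle Hypothesis~\ref{H1}: if $\tau\in\Sigma_{H,k}^S$ then each row $\tau_i$ lies in $\Sigma_{H,k}^P\subset\Sigma_{h,k}^P$ by the Poisson inclusion, while the scalar identity $\int_\Omega\tr\tau\,dx=0$ is unchanged, so $\tau\in\Sigma_{h,k}^S$; moreover $\ddiv\tau=(\ddiv\tau_1,\dots,\ddiv\tau_d)^T$ with each $\ddiv\tau_i\in\ddiv\Sigma_{h,k}^P\subset U_{h,k}^P$, hence $\ddiv\tau\in(U_{h,k}^P)^d=U_{h,k}^S$. For Hypothesis~\ref{H3} I would let $\mathcal S_H^S$ act componentwise by the Poisson operator $\mathcal S_H^P$ of \cite[Lemma~2.8]{huang2012convergence}; since the discrete norm $\|\cdot\|_{1,h,G}$ of a vector field is the Euclidean sum of the componentwise discrete norms, both the exactness on unrefined elements and the local estimate transfer verbatim. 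Hypothesis~\ref{lemma:PostreioriEstimation} is obtained from the a posteriori analysis of the pseudostress--velocity Raviart--Thomas and Brezzi--Douglas--Marini discretizations in \cite{carstensen2011priori}: the residual estimator has exactly the structure used for the Poisson problem but with $\A\sigma_h=\dev\sigma_h$, and reliability rests on the structural hypothesis \eqref{coe}, i.e. the Ne\v{c}as-type inequality $\|\tau\|_{L^2(\Omega)}^2\lesssim\|\dev\tau\|_{L^2(\Omega)}^2+\|\ddiv\tau\|_{H^{-1}(\Omega)}^2$ for tensors of vanishing trace mean.

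The only step needing genuine care is Hypothesis~\ref{H2}. Given $v_h=(v_h^1,\dots,v_h^d)\in U_h^S$, I would apply the scalar Poisson inf-sup of \cite[Lemma~2.1]{lovadina2006energy} to each component $v_h^i$ to produce $\tau_h^{(i)}\in\Sigma_h^P$ with $\|\tau_h^{(i)}\|_{L^2(\Omega)}=1$ and $(\ddiv\tau_h^{(i)},v_h^i)_{L^2(\Omega)}\gtrsim\|v_h^i\|_{1,h}$, and stack the $\tau_h^{(i)}$ as the rows of a tensor $\tau_h\in(\Sigma_h^P)^d$; then $(\ddiv\tau_h,v_h)_{L^2(\Omega)}=\sum_i(\ddiv\tau_h^{(i)},v_h^i)_{L^2(\Omega)}\gtrsim\sum_i\|v_h^i\|_{1,h}\ge\|v_h\|_{1,h}$ while $\|\tau_h\|_{L^2(\Omega)}^2=d$, so $(\ddiv\tau_h,v_h)_{L^2(\Omega)}\gtrsim\|v_h\|_{1,h}\|\tau_h\|_{L^2(\Omega)}$. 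To land inside $\Sigma_h^S$ I would replace $\tau_h$ by $\tilde\tau_h:=\tau_h-cI$ with $c:=(d|\Omega|)^{-1}\int_\Omega\tr\tau_h\,dx$: the constant tensor $cI$ belongs to $(\Sigma_h^P)^d$, has $\ddiv(cI)=0$, satisfies $\|cI\|_{L^2(\Omega)}\le\|\tau_h\|_{L^2(\Omega)}$ by Cauchy--Schwarz, and the choice of $c$ forces $\int_\Omega\tr\tilde\tau_h\,dx=0$. Hence $\tilde\tau_h\in\Sigma_h^S$, $\ddiv\tilde\tau_h=\ddiv\tau_h$, and $\|\tilde\tau_h\|_{L^2(\Omega)}\lesssim\|\tau_h\|_{L^2(\Omega)}$, which yields the discrete inf-sup in the stated form; the equivalent formulation \eqref{einfsup} then follows from \cite{brezzi1991mixed}.

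The main difficulty here is not really located in any single step: the argument is essentially a componentwise transfer from the Poisson setting, and the only subtlety is reconciling the mean-trace-zero side condition with the rowwise inf-sup, which the constant-tensor correction resolves without disturbing the divergence or, up to constants, the $L^2$ norm. By contrast, Hypothesis~\ref{H4} for the Stokes problem is genuinely harder and is treated separately, via a Helmholtz-type decomposition of deviatoric tensors.
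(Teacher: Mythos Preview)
Your proposal is correct and follows essentially the same route as the paper's own proof: componentwise reduction to the Poisson case for Hypotheses~\ref{H1} and~\ref{H3}, the reference to \cite{carstensen2011priori} for Hypothesis~\ref{lemma:PostreioriEstimation}, and for Hypothesis~\ref{H2} the rowwise inf-sup followed by subtracting the constant tensor $cI$ to enforce the trace-mean constraint. The only cosmetic difference is that the paper observes $\|\tilde\tau_h\|_{L^2(\Omega)}^2=\|\tau_h\|_{L^2(\Omega)}^2-(\int_\Omega\tr\tau_h\,dx)^2/(d|\Omega|)\le\|\tau_h\|_{L^2(\Omega)}^2$ by orthogonality rather than your triangle-inequality bound, but this is immaterial.
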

\begin{proof}
The proofs of Hypothesis \ref{H1} and \ref{H3} are similar to those for  the Poisson problem. The proof of Hypothesis \ref{lemma:PostreioriEstimation} can be found in \cite[Theorem 5.4]{carstensen2011priori} for the 2D case while the proof  of  the 3D case is similar.
In order to show Hypothesis \ref{H2}, we  need to handle the additional restriction on  the mean of the trace of the stress.
In fact,  given $v_h=(v_1,\cdots,v_d)^T\in U_{h,k}^S$ with $v_i\in U_{h,k}^P$ $(i=1,\cdots,d)$, there exists $\tau_i\in\Sigma_{h,k}^P$ such that
$$\|v_i\|_{1,h}\lesssim \frac{(\ddiv\tau_i,v_i)_{L^2(\Omega)}}{\|\tau_i\|_{L^2(\Omega)}}.$$
Let $\tau_h=(\tau_1,\cdots,\tau_d)^T$, and  define
$$
\tilde{\tau}_h:=\tau_h-\frac{\int_\Omega \tr\tau_h dx}{d|\Omega|}I_{d\times d},
$$
then $\tilde{\tau}_h\in \Sigma_{h,k}^S$ and $$(\ddiv\tilde{\tau}_h,v_h)_{L^2(\Omega)}=(\ddiv\tau_h,v_h)_{L^2(\Omega)}.$$
Since $$ \|\tilde{\tau}_h\|^2_{L^2(\Omega)}=\|\tau_h\|^2_{L^2(\Omega)}-\frac{(\int_\Omega\tr\tau_hdx)^2}{d|\Omega|}\le\|\tau_h\|^2_{L^2(\Omega)},$$
 we immediately have $$\|v_h\|_{1,h}\lesssim \frac{(\ddiv\tau_h,v_h)_{L^2(\Omega)}}{\|\tau_h\|_{L^2(\Omega)}}\le \frac{(\ddiv\tilde{\tau}_h,v_h)_{L^2(\Omega)}}{\|\tilde{\tau}_h\|_{L^2(\Omega)}}.$$
 This proves Hypothesis \ref{H2}.
\end{proof}

\begin{thm}\label{thm:Interpolation4}
For  the Raviart--Thomas and Brezzi--Douglas--Marini elements of the Stokes problem, Hypothesis \ref{H4} holds.
\end{thm}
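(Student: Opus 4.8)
The plan is to reduce the tensor-valued problem to the Poisson (vector-valued) case of Theorem~\ref{thm:Interpolation2} by working rowwise, and then to repair the only feature distinguishing $\Sigma_H^S$ from $(\Sigma_{H,k}^P)^d$ --- the constraint $\int_\Omega\tr\tau\,dx=0$ --- by a correction proportional to the identity tensor, which will be absorbed into $\phi$.

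First I would observe that $\ddiv$ and $\Curl$ act rowwise and that $\Sigma_{h,k}^S\subseteq(\Sigma_{h,k}^P)^d$, so every $\xi_h\in\Sigma_h^S$ with $\ddiv\xi_h=0$ has each row $(\xi_h)_i\in\Sigma_{h,k}^P$ divergence free. Applying Theorem~\ref{thm:Interpolation2} to each row yields $(\varphi_h)_i\in H_h^P(\Curl,\Omega)$ with $(\xi_h)_i=\Curl(\varphi_h)_i$, along with the operator $\Pi_H^P$ built there (the Scott--Zhang interpolation for $d=2$, and $\mathcal I_H+\mathcal P_{H,k}-\mathcal I_H\mathcal P_{H,k}$ for $d=3$). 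Setting $H_h^S(\Curl,\Omega):=(H_h^P(\Curl,\Omega))^d$, $\varphi_h:=((\varphi_h)_1,\dots,(\varphi_h)_d)^\top$, and letting $\Pi_H^P$ act rowwise on tensors, one obtains $\xi_h=\Curl\varphi_h$ and $\Curl\Pi_H^P\varphi_h\in(\Sigma_{H,k}^P)^d$; the only point still missing for \eqref{eq: CommutativeProperty} is that $\Curl\Pi_H^P\varphi_h$ need not have vanishing trace mean.

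To repair this I would introduce the fixed first-order polynomial tensor field $\chi$ with $\Curl\chi=I_{d\times d}$, namely $\chi=(x_2,-x_1)^\top$ for $d=2$ and the field with rows $\tfrac12\,e_i\times x$ for $d=3$; in both cases $\chi\in H_H^S(\Curl,\Omega)$, its entries being first-order polynomials of the type those spaces admit. With $m:=\tfrac1{d|\Omega|}\int_\Omega\tr(\Curl\Pi_H^P\varphi_h)\,dx$, I would set $\Pi_H^S\varphi_h:=\Pi_H^P\varphi_h-m\chi\in H_H^S(\Curl,\Omega)$, so that $\Curl\Pi_H^S\varphi_h=\Curl\Pi_H^P\varphi_h-m\,I_{d\times d}\in(\Sigma_{H,k}^P)^d$ has zero trace mean and hence lies in $\Sigma_{H,k}^S=\Sigma_H$, which gives \eqref{eq: CommutativeProperty}.

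For the decomposition \eqref{eq: decompositon} I would write $\xi_h-\Curl\Pi_H^S\varphi_h=\Curl(\varphi_h-\Pi_H^P\varphi_h)+m\,I_{d\times d}$. The rowwise Poisson construction gives $\Curl(\varphi_h-\Pi_H^P\varphi_h)=\Curl\psi$, with $\psi:=\varphi_h-\Pi_H^P\varphi_h$ for $d=2$ and $\psi$ the $H^1$ part of the rowwise decomposition of Lemma~\ref{lemma:Schoberl2008} for $d=3$; in either case $\psi\in H^1(\Omega)$, and each of the three estimates in \eqref{eq: Interpolation2} follows by squaring and summing over the $d$ rows the corresponding scalar estimate, together with $\sum_i\|\Curl(\varphi_h)_i\|_{L^2(\Omega)}^2=\|\Curl\varphi_h\|_{L^2(\Omega)}^2$. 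Taking $\phi:=m\,I_{d\times d}\in L^2(\Omega)$ then yields \eqref{eq: decompositon}, and $(\A\sigma_H,\phi)_{L^2(\Omega)}=m\int_\Omega\tr(\A\sigma_H)\,dx=0$ since $\tr(\A\tau)\equiv0$ for $\A\tau=\tau-\tfrac1d(\tr\tau)I_{d\times d}$. The delicate point is the correction $-m\chi$: one must notice that the trace constraint can be met by a multiple of $I_{d\times d}$, since precisely such a correction is simultaneously $\A\sigma_H$-orthogonal and the curl of a coarse-mesh field, so it drops out of \eqref{divergencefree} and costs nothing in the proof of Theorem~\ref{thm:DiscreteUpperBound}.
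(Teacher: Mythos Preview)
Your proposal is correct and follows essentially the same approach as the paper: apply the Poisson-case operator $\Pi_H^P$ rowwise, subtract a multiple of the explicit field $\chi$ with $\Curl\chi=I_{d\times d}$ to enforce the trace-mean constraint, and absorb the resulting $mI_{d\times d}$ into $\phi$, where $(\A\sigma_H,\phi)=0$ because $\tr(\A\tau)\equiv0$. The only cosmetic difference is that the paper writes out the explicit coordinate expressions for $\chi$ in $d=2$ and $d=3$ rather than naming it, and calls the rowwise operator $\tilde\Pi_H^S$.
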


\begin{proof} For this case,  the finite element space $H_h^S(\Curl,\Omega):=(H_h^P(\Curl,\Omega))^d$.
 For $\varphi_h=(\varphi_1,\cdots, \varphi_d)^T\in H_h^S(\Curl,\Omega)$, define $\tilde{\Pi}_H^S\varphi_h=(\Pi_H^P\varphi_1,\cdots, \Pi_H^P\varphi_d)^T$, and
\begin{equation*}
\Pi_H^S\varphi_h:= \tilde{\Pi}_H^S\varphi_h-\frac{\int_\Omega \tr(\Curl\tilde{\Pi}_H^S\varphi_h)dx}{d|\Omega|}\left(\begin{array}{c}x_2\\-x_1\end{array}\right), \quad d=2,
\end{equation*}
$$
\Pi_H^S\varphi_h:= \tilde{\Pi}_H^S\varphi_h-\frac{\int_\Omega \tr(\Curl\tilde{\Pi}_H^S\varphi_h)dx}{2d|\Omega|}
\left(\begin{array}{ccc}0 &-x_3 &x_2\\x_3 &0 &-x_1\\-x_2 &x_1 &0\end{array}\right), \quad d=3.
$$
This yieds
$$
\Curl\Pi_H^S\varphi_h= \Curl\tilde{\Pi}_H^S\varphi_h-\frac{\int_\Omega \tr(\Curl\tilde{\Pi}_H^S\varphi_h)dx}{d|\Omega|}I_{d\times d} \mbox{~and~} \int_\Omega \tr(\Curl\Pi_H^S\varphi_h) dx=0,
$$
and so $\Curl\Pi_H^S\varphi_h\in\Sigma_H^S$.

For any $\varphi_i\in H_h^P(\Curl,\Omega)$, from Theorem \ref{thm:Interpolation2} there exists $\psi_i\in H^1(\Omega)$ such that
$$\Curl(\varphi_i-\Pi_H^P\varphi_i)=\Curl\psi_i,$$
and $\psi_i$ satisfy the condition of \eqref{eq: Interpolation2}.  Let $\psi:=(\psi_1,\cdots,\psi_d)^T$. It holds that
$$
\Curl(\varphi_h-\Pi_H^S\varphi_h)=\Curl\psi+\frac{\int_\Omega \tr(\Curl\tilde{\Pi}_H^S\varphi_h)dx}{d|\Omega|}I_{d\times d}.
$$
Hence the condition \eqref{eq: Interpolation2} is satisfied with  $\phi:=\frac{\int_\Omega \tr(\Curl\tilde{\Pi}_H^S\varphi_h)dx}{d|\Omega|}I_{d\times d}$. In addition, $(\A\sigma_H,\phi)_{L^2(\Omega)}=0$.

\end{proof}

\bibliographystyle{abbrv}
\bibliography{Hu-ref-2015-12-20}

\begin{thebibliography}{10}

\bibitem{ainsworth2007posteriori}
M.~Ainsworth.
\newblock A posteriori error estimation for lowest order {Raviart-Thomas} mixed
  finite elements.
\newblock {\em SIAM Journal on Scientific Computing}, 30(1):189--204, 2007.

\bibitem{alonso1996error}
A.~Alonso.
\newblock Error estimators for a mixed method.
\newblock {\em Numerische Mathematik}, 74(4):385--395, 1996.

\bibitem{arnold1982interior}
D.~N. Arnold.
\newblock An interior penalty finite element method with discontinuous
  elements.
\newblock {\em SIAM Journal on Numerical Analysis}, 19(4):742--760, 1982.

\bibitem{arnold1988new}
D.~N. Arnold and R.~S. Falk.
\newblock A new mixed formulation for elasticity.
\newblock {\em Numerische Mathematik}, 53(1-2):13--30, 1988.

\bibitem{babuvska1984feedback}
I.~Babu{\v{s}}ka and M.~Vogelius.
\newblock Feedback and adaptive finite element solution of one-dimensional
  boundary value problems.
\newblock {\em Numerische Mathematik}, 44(1):75--102, 1984.

\bibitem{becker2008optimally}
R.~Becker and S.~Mao.
\newblock An optimally convergent adaptive mixed finite element method.
\newblock {\em Numerische Mathematik}, 111(1):35--54, 2008.

\bibitem{binev2004adaptive}
P.~Binev, W.~Dahmen, and R.~DeVore.
\newblock Adaptive finite element methods with convergence rates.
\newblock {\em Numerische Mathematik}, 97(2):219--268, 2004.

\bibitem{braess1996posteriori}
D.~Braess and R.~Verf{\"u}rth.
\newblock A posteriori error estimators for the {Raviart-Thomas} element.
\newblock {\em SIAM Journal on Numerical Analysis}, 33(6):2431--2444, 1996.

\bibitem{brandts1994superconvergence}
J.~H. Brandts.
\newblock Superconvergence and a posteriori error estimation for triangular
  mixed finite elements.
\newblock {\em Numerische Mathematik}, 68(3):311--324, 1994.

\bibitem{brezzi1991mixed}
F.~Brezzi and M.~Fortin.
\newblock Mixed and hybrid finite element methods.
\newblock 1991.

\bibitem{cai2004least}
Z.~Cai, B.~Lee, and P.~Wang.
\newblock Least-squares methods for incompressible newtonian fluid flow: Linear
  stationary problems.
\newblock {\em SIAM Journal on Numerical Analysis}, 42(2):843--859, 2004.

\bibitem{cai2010mixed}
Z.~Cai, C.~Tong, P.~S. Vassilevski, and C.~Wang.
\newblock Mixed finite element methods for incompressible flow: Stationary
  {Stokes} equations.
\newblock {\em Numerical Methods for Partial Differential Equations},
  26(4):957--978, 2010.

\bibitem{carstensen1997posteriori}
C.~Carstensen.
\newblock A posteriori error estimate for the mixed finite element method.
\newblock {\em Mathematics of Computation of the American Mathematical
  Society}, 66(218):465--476, 1997.

\bibitem{carstensen2005unifying}
C.~Carstensen.
\newblock A unifying theory of a posteriori finite element error control.
\newblock {\em Numerische Mathematik}, 100(4):617--637, 2005.

\bibitem{carstensen2012review}
C.~Carstensen, M.~Eigel, R.~H. Hoppe, and C.~L{\"o}bhard.
\newblock A review of unified a posteriori finite element error control.
\newblock {\em Numerical Mathematics: Theory, Methods and Applications},
  5(4):509--558, 2012.

\bibitem{carstensen2014axioms}
C.~Carstensen, M.~Feischl, M.~Page, and D.~Praetorius.
\newblock Axioms of adaptivity.
\newblock {\em Computers \& Mathematics with Applications}, 67(6):1195--1253,
  2014.

\bibitem{carstensen2013quasi}
C.~Carstensen, D.~Gallistl, and M.~Schedensack.
\newblock Quasi-optimal adaptive pseudostress approximation of the {Stokes}
  equations.
\newblock {\em SIAM Journal on Numerical Analysis}, 51(3):1715--1734, 2013.

\bibitem{carstensen2006error}
C.~Carstensen and R.~Hoppe.
\newblock Error reduction and convergence for an adaptive mixed finite element
  method.
\newblock {\em Mathematics of computation}, 75(255):1033--1042, 2006.

\bibitem{carstensen2011priori}
C.~Carstensen, D.~Kim, and E.~J. Park.
\newblock A priori and a posteriori pseudostress-velocity mixed finite element
  error analysis for the {Stokes} problem.
\newblock {\em SIAM Journal on Numerical Analysis}, 49(6):2501--2523, 2011.

\bibitem{carstensen2011optimal}
C.~Carstensen and H.~Rabus.
\newblock An optimal adaptive mixed finite element method.
\newblock {\em Mathematics of Computation}, 80(274):649--667, 2011.

\bibitem{cascon2008quasi}
J.~M. Cascon, C.~Kreuzer, R.~H. Nochetto, and K.~G. Siebert.
\newblock Quasi-optimal convergence rate for an adaptive finite element method.
\newblock {\em SIAM Journal on Numerical Analysis}, 46(5):2524--2550, 2008.

\bibitem{chen2009convergence}
L.~Chen, M.~Holst, and J.~Xu.
\newblock Convergence and optimality of adaptive mixed finite element methods.
\newblock {\em Mathematics of Computation}, 78(265):35--53, 2009.

\bibitem{dorfler1996convergent}
W.~D{\"o}rfler.
\newblock A convergent adaptive algorithm for {Poisson}'s equation.
\newblock {\em SIAM Journal on Numerical Analysis}, 33(3):1106--1124, 1996.

\bibitem{du2012error}
S.~Du and X.~Xie.
\newblock Error reduction, convergence and optimality for adaptive mixed finite
  element methods for diffusion equations.
\newblock {\em Journal of Computational Mathematics}, 30(5), 2012.

\bibitem{du2014residual}
S.~Du and X.~Xie.
\newblock On residual-based a posteriori error estimators for lowest-order
  {Raviart-Thomas} element approximation to convection-diffusion-reaction
  equations.
\newblock {\em Journal of Computational Mathematics}, 32(5):522--546, 2014.

\bibitem{hoppe1997adaptive}
R.~H. Hoppe and B.~Wohlmuth.
\newblock Adaptive multilevel techniques for mixed finite element
  discretizations of elliptic boundary value problems.
\newblock {\em SIAM Journal on Numerical Analysis}, 34(4):1658--1681, 1997.

\bibitem{hu2012convergence}
J.~Hu, Z.~Shi, and J.~Xu.
\newblock Convergence and optimality of the adaptive {Morley} element method.
\newblock {\em Numerische Mathematik}, 121(4):731--752, 2012.

\bibitem{hu2007convergence}
J.~Hu and J.~Xu.
\newblock Convergence of adaptive conforming and nonconforming finite element
  methods for the perturbed {Stokes} equation.
\newblock Technical report, Research Report. School of Mathematical Sciences
  and Institute of Mathematics, Peking University, 2007.

\bibitem{huang2012convergence}
J.~Huang and Y.~Xu.
\newblock Convergence and complexity of arbitrary order adaptive mixed element
  methods for the {Poisson} equation.
\newblock {\em Science China Mathematics}, 55(5):1083--1098, 2012.

\bibitem{kim2012guaranteed}
K.-Y. Kim.
\newblock Guaranteed a posteriori error estimator for mixed finite element
  methods of elliptic problems.
\newblock {\em Applied Mathematics and Computation}, 218(24):11820--11831,
  2012.

\bibitem{larson2008posteriori}
M.~G. Larson and A.~M{\aa}lqvist.
\newblock A posteriori error estimates for mixed finite element approximations
  of elliptic problems.
\newblock {\em Numerische Mathematik}, 108(3):487--500, 2008.

\bibitem{lovadina2006energy}
C.~Lovadina and R.~Stenberg.
\newblock Energy norm a posteriori error estimates for mixed finite element
  methods.
\newblock {\em Mathematics of Computation}, 75(256):1659--1674, 2006.

\bibitem{mekchay2005convergence}
K.~Mekchay and R.~H. Nochetto.
\newblock Convergence of adaptive finite element methods for general second
  order linear elliptic {PDEs}.
\newblock {\em SIAM Journal on Numerical Analysis}, 43(5):1803--1827, 2005.

\bibitem{morin2000data}
P.~Morin, R.~Nochetto, and K.~Siebert.
\newblock Data oscillation and convergence of adaptive {FEM}.
\newblock {\em SIAM Journal on Numerical Analysis}, 38(2):466--488, 2000.

\bibitem{morin2003local}
P.~Morin, R.~Nochetto, and K.~Siebert.
\newblock Local problems on stars: a posteriori error estimators, convergence,
  and performance.
\newblock {\em Mathematics of Computation}, 72(243):1067--1097, 2003.

\bibitem{morin2008basic}
P.~Morin, K.~G. Siebert, and A.~Veeser.
\newblock A basic convergence result for conforming adaptive finite elements.
\newblock {\em Mathematical Models and Methods in Applied Sciences},
  18(05):707--737, 2008.

\bibitem{schoberl2008posteriori}
J.~Sch{\"o}berl.
\newblock A posteriori error estimates for {Maxwell} equations.
\newblock {\em Mathematics of Computation}, 77(262):633--649, 2008.

\bibitem{scott1990finite}
L.~R. Scott and S.~Zhang.
\newblock Finite element interpolation of nonsmooth functions satisfying
  boundary conditions.
\newblock {\em Mathematics of Computation}, 54(190):483--493, 1990.

\bibitem{stevenson2007optimality}
R.~Stevenson.
\newblock Optimality of a standard adaptive finite element method.
\newblock {\em Foundations of Computational Mathematics}, 7(2):245--269, 2007.

\bibitem{vohralik2007posteriori}
M.~Vohral{\'\i}k.
\newblock A posteriori error estimates for lowest-order mixed finite element
  discretizations of convection-diffusion-reaction equations.
\newblock {\em SIAM Journal on Numerical Analysis}, 45(4):1570--1599, 2007.

\bibitem{wheeler2005posteriori}
M.~F. Wheeler and I.~Yotov.
\newblock A posteriori error estimates for the mortar mixed finite element
  method.
\newblock {\em SIAM Journal on Numerical Analysis}, 43(3):1021--1042, 2005.

\bibitem{wohlmuth1999comparison}
B.~Wohlmuth and R.~Hoppe.
\newblock A comparison of a posteriori error estimators for mixed finite
  element discretizations by {Raviart-Thomas} elements.
\newblock {\em Mathematics of Computation of the American Mathematical
  Society}, 68(228):1347--1378, 1999.

\bibitem{zhong2012convergence}
L.~Zhong, L.~Chen, S.~Shu, G.~Wittum, and J.~Xu.
\newblock Convergence and optimality of adaptive edge finite element methods
  for time-harmonic {Maxwell} equations.
\newblock {\em Mathematics of Computation}, 81(278):623--642, 2012.

\end{thebibliography}

\end{document}